\DeclareMathOperator{\er}{\mathds{E}}
\DeclareMathOperator{\pr}{\mathbb{P}}
\DeclareMathOperator{\du}{{\rm d}\it u}
\DeclareMathOperator{\dx}{{\rm d}\it x}
\newcommand{\simdis}{\stackrel{d}{=}}
\newcommand{\limdis}{\stackrel{d}{\longrightarrow}}
\newcommand{\II}{{\rm  1~\hspace{-1.1ex}l}}
\newcommand{\siminfini}{\stackbin[+\infty]{}{\sim}}
\newcommand{\TT}{\mathds{T}}
\newcommand{\lao}{{\lambda_{_0}}}
\newcommand{\ii}{{\rm i}}
\newtheorem{theorem}{Theorem}
\newtheorem{definition}{Definition}
\newtheorem{remark}{Remark}
\newtheorem{corollary}{Corollary}
\newtheorem{lemma}{Lemma}
\newtheorem{example}{Example}
\newtheorem{proposition}{Proposition}
\newcommand\rsetminus{\mathbin{\mathpalette\rsetminusaux\relax}}
\newcommand\rsetminusaux[2]{\mspace{-4mu}
  \raisebox{\rsmraise{#1}\depth}{\rotatebox[origin=c]{-20}{$#1\smallsetminus$}}
 \mspace{-4mu}
}
\newcommand\rsmraise[1]{%
  \ifx#1\displaystyle .8\else
    \ifx#1\textstyle .8\else
      \ifx#1\scriptstyle .6\else
        .45%
      \fi
    \fi
  \fi}
\title[Mellin transform, size Biasing and stationary excess operator]{Refined properties for the Mellin transform, with applications to convergence of families obtained by biasing or by the stationary excess operator}
\author[W. al Jedidi, F. Bouzeffour, N. Harthi]{Wissem al Jedidi$^{\diamond,\star}$, Fethi Bouzaffour$^{\diamond \diamond}$, Nouf Harthi $^{\diamond }$}
\address{\rm  $^{\star}$  Universit\'e de Tunis El Manar, Facult\'e des Sciences de Tunis, LR11ES11 Laboratoire d'Analyse Math\'ematiques et Applications, 2092, Tunis, Tunisie.}
\address{\rm  $^{\diamond}$  Department of Statistics \& OR, King Saud University, P.O. Box 2455, Riyadh 11451, Saudi Arabia  \\{\em Email} : {\tt wissem\_jedidi@yahoo.fr}  {\em Email} : {\tt nharthi@yahoo.com}}
\address{\rm $^{\diamond \diamond}$  King Saud Univ, Dept Math, Coll Sci, Riyadh 11451, Saudi Arabia  {\em Email} : {\tt fbouzaffour@ksu.edu.sa}}
\date{\today}
\begin{document}
\maketitle
\begin{abstract}  We first provide some properties of the Mellin transform of nonnegative random variables, such that monotonicity, injectivity and effect of size biasing. Convergence of Mellin transforms is also entirely formalized through convergence in distribution and uniform integrability. As an application, we study  a problem raised by   Harkness and Shantaram (1969) who obtained, under sufficient conditions, a limit theorem for sequences of nonnegative random variables build with the iterated stationary excess operator. We reformulate this problem through the concept of multiply monotone functions and through  the convergence of the families build by the continuous time version of the iterated stationary excess operator and also by size biasing. The latter allows us to  show that in our context, continuous time convergence is equivalent to discrete time convergence, that the conditions of Harkness and Shantaram  are actually necessary and that the only possible limits in distribution are  mixture of exponential with log-normal distributions.
\end{abstract}
\keywords{Convergence in distribution, Mellin transform of random variables, Log-normal, Moment indeterminate, Multiply monotone functions,  Normal limit theorem, Size biasing, Stationary excess operator,  Uniform integrability.}

\subjclass[MSC2010]{42A38, 44A60, 60E07, 60E15.}
\section{Introduction and main results}
The Mellin transform of a nonnegative random variable $X$ is defined by $$\mathcal{M}_X(\lambda)=\er[X^\lambda],\quad \mbox{for $\lambda$ in some domain of definition in the complex plane},$$
and  could be also interpreted as the moment generating function of $\log X$. We denote  $\mathcal{D}_X$ the domain of definition of $\mathcal{M}_X$ restricted on the real line.

Laplace transform of nonnegative random variables and characteristic functions of real-values random variables are always well defined, respectively on the half real line and on the real line, and they entirely characterize the distribution. At the contrary, the Mellin transform could have problems of definition and for this reason formalization of  its injectivity is not straightforward. Nevertheless, injectivity on the Mellin transform seems to be commonly admitted in the literature and used  without a precise reference. In Chapter VI of Widder's book \cite{widder},  Theorem 6a p. 243, it is stated that {\it if the Mellin transforms  of two  nonnegative random variables $X$ and $Y$ are well defined on some strip $\alpha< Re(z) < \beta$, and equal there, then $X\simdis Y$}. We improve this result by showing that the same conclusion holds if  {\it the strip is replaced by an interval}.

Mellin transform works well with size biased distributions, i.e. for distributions of random variables weighted by a power function:
$$ X_{(t)} :\simdis \frac{x^t}{\er[X^t]} \mathbb{P}(X\in dx), \quad t \in \mathcal{D}_X .$$
The distribution of the random variable $X_{(t)}$ is called the size biased law of transform of order $t$ of $X$. It is well know that   $X_{(t)}$ is stochastically bigger than $X$  when  $t>0$ and we were interested by deriving new properties. In the dissertation of the third author \cite{Har},  some convexity properties of the Mellin  transform of $X_{(t)}$ were required and we were addressed to the monotonicity property of the  $t\mapsto \mathcal{M}_X(\lambda+t)/\mathcal{M}_X(t), \, t\geq 0$ for fixed $\lambda>0$. This made as aware of the works of Harkness  and Shantaram \cite{hs1} who proved that the last function is non-decreasing in $t \in \mathds{\mathds{N}}$ when $\lambda=1$. We prove that is also true in $\, t\in [0,\infty)$ for any $\lambda>0$.

Harkness  and Shantaram \cite{hs1} were actually motivated by a limit theorem that we explain as follows. Let $X$ a nonnegative random variable having moments of all orders. The stationary excess operator builds a new distribution from the one of $X$  by this means:
$$\pr(\mathcal{E}_1(X)> x)=\frac{1}{\er [X]}\int_x^\infty   \pr(X>u) \,du,\quad x \geq 0.$$
The $n^{th}$ iterate $\mathcal{E}_n$, of the operator $\mathcal{E}_1$, builds a sequence of random variables $\mathcal{E}_n(X), \, n\in \mathds{N}$. In \cite{hs1} it was shown that if the sequence $Z_n= \mathcal{E}_n(X) /c_n$ converges in distribution  to some random variable $Z_\infty$ and with some deterministic normalization $c_n$ such that
\begin{equation}\limsup_{n\to \infty} \frac{c_{n+1}}{c_n} \in [1,\infty),
\label{cn}\end{equation}
then, necessarily $\lim_{n\to \infty}c_{n+1}/c_n = l \in [1,\infty)$ and  $\lim_{n\to \infty}\er[Z_n^k] = \er[Z_\infty^k]  \in (0,\infty)$  for every $k\in \mathds{N}$. Many authors were motivated by this problem  and  studied the set of possible distribution for $Z_\infty$, see  the works of  Arratia,  Goldstein and   Kochman \cite{agk},  van Beek and   Braat \cite{bb}, Garcia \cite{gar},   Shantaram  and  Harkness\cite{hs2},  Pakes \cite{pak1,pak2}, Vardi,  Shepp and Logan \cite{vard} for instance. Their approach was mainly based on the fact  that  the distribution of $Z_\infty$  necessarily satisfies
$$\pr(Z_\infty \leq x)=\frac{1}{\er[Z_\infty]}\int_0^{lx} \pr(Z_\infty > x)\, \du, \quad \mbox{for every $x\geq 0$.}$$
The latter is equivalent to the identity in law observed   in Theorem \ref{th3}  below
\begin{equation} Z_\infty  \simdis U_l \,(Z_\infty)_{(1)},
\label{czi}\end{equation}
for $s=1$ and $c=\log l$, and  $U_l$ is uniformly distributed over $[0,l^{-1}]$ and independent from $(Z_\infty)_{(1)}$. We will see that the only significant  information one can extract from identity (\ref{czi})  is that   $Z_\infty$ has the same integer moment as an exponential distribution multiplicatively mixed  by the Log-normal distribution with parameters depending on $l$ and on the value attributed to  $\er[Z_\infty]$. There is then a problem of determinacy in law for $Z_\infty$ since the Log-normal distribution is well known to be moment indeterminate.

Also motivated by this problem of indeterminacy,  we were concerned with the natural question: what  additional information on the distribution of $Z_\infty$ can we obtain if we study the continuous scheme $Z_t= \mathcal{E}_t(X) /c_t$, $t\in [0,\infty)$ where $\mathcal{E}_t$ is the continuous time stationary-excess operator given by
\begin{equation}
\pr(\mathcal{E}_t(X)> x)=\frac{t}{\er [X^t]}\int_x^\infty  (u-x)^{t-1} \pr(X>u) \,du,\quad x \geq 0 \,?
\label{ctse}\end{equation}
Notice that the limit $Z_\infty$ obtained by the discrete scheme has necessarily the same distribution as the one of the continuous scheme.

In section \ref{limel}, we make a digression and entirely formalize  convergence of the Mellin transforms of  general families of random variables (in both discrete and continuous time i.e. $t \in \mathds{N}$ and $t\in [0,\infty)$) through convergence in distribution and uniform integrability. The form (\ref{ctse}), justifies our investigation on $t$-monotone  functions in section  \ref{tmono}.  The results obtained in both sections \ref{limel} and \ref{tmono} will allow us to contribute in section \ref{limit} to the problem raised by  Harkness  and Shantaram \cite{hs1}. We  simplify their problem and solve it as follows:

\begin{enumerate}
\item  condition (\ref{cn}) is not only sufficient, but also necessary for the convergence in distribution of $Z_t$ if we require some integrability on $Z_\infty$;
\item  under (1), convergence of $Z_t$ in  both schemes $t \in \mathds{N}$ or $t \in [0,\infty)$, is equivalent to
$$\frac{X_{(t)}}{\rho_t}\limdis X_\infty \qquad \mbox{as} \; t \to \infty \quad \mbox{and} \quad t\in \mathds{N}\; \mbox{or} \; t \in [0,\infty),$$
the normalization  $\rho_t$ being necessarily equivalent to $t\, c_t$ at infinity;
\item it holds that $Z_\infty \simdis \mathfrak{e} \, X_\infty$ where $\mathfrak{e}$ is independent from $X_\infty $ and is exponentially distributed;
\item the only possible distributions for the limit  $X_\infty$ is  Log-normal.
\end{enumerate}

\bigskip

In what follows, we only deal with nonnegative random variables. The notation $\TT$ stands for  $\mathds{N}$, the set of nonnegative integers, or for the interval  $[0,\infty)$.
\section{Definiteness,  monotonicity and injectivity of the Mellin transform}
The Mellin transform of a nonnegative random variable $X$ is defined by
$$\lambda \mapsto\er[X^\lambda],\quad  \lambda\in \mathcal{D}_X=\{x\in \mathds{R}\,;\,\er[X^x]<\infty \}. $$
We recall H\"older inequality true for every real random variables
\begin{equation}
\er[|UV|]\leq \er[|U|^p]^{\frac{1}{p}}\, \er[|V|^q]^{\frac{1}{p}},\quad p,q>0, \; \frac{1}{p}+\frac{1}{q}=1,
\label{UV}\end{equation}
whenever the expectations are finite. The equality holds in (\ref{UV}) if and only if there exist constants $a , b \geq  0$, not both zero, such that $a|U|^p=b|V|^q$. It is then clear that for a positive random variable $X$, the standard Lyapunov inequality holds:
\begin{equation}
\er[X^{\lambda}]^{\frac{1}{\lambda}}\leq \er[X^{\lao}]^{\frac{1}{\lao}} \quad \mbox{for}\;\; 0<\lambda \leq\lao \quad \mbox{and} \quad
\er[X^\mu]^{\frac{1}{\mu}}\leq \er[X^{\mu_0}]^{\frac{1}{\mu_0}}\; \quad \mbox{for}\;\; \mu_0\leq \mu<0,
\label{ab}\end{equation}
whenever the expectations are finite. The latter justifies that if $\mathcal{D}_X$ contains some  $\lao>0$ (respectively some $\mu_0<0$), then $\mathcal{D}_X$ contains the interval $[0,\lao]$ (respectively  $[\mu_0,0]$). It is then seen that $\mathcal{D}_X$ is an interval with extremities
\begin{equation}
\mu_X =\inf \{\lambda \in \mathds{R}, \; \er[X^\lambda]<\infty\}\quad \mbox{and}\quad\lambda_X =\sup \{\lambda \in \mathds{R}, \; \er[X^\lambda]<\infty\},
\label{rlx} \end{equation}
not necessarily included. Assume $\lambda_X >0$.  By the dominated convergence theorem,  we see that the Mellin transform of $X$ is often differentiable on $(0, \lambda_X)$ and by (\ref{ab}) that
$$\mbox{\it    $\lambda \mapsto \er[X^\lambda]^{1/\lambda}$ is nondecreasing  on $[0,\lambda_X)$.}$$

\noindent The last fact could be also seen as a consequence of this Proposition:
\begin{proposition} For any nonnegative random  variable $X$ such that $\lambda_X>0$, the Mellin transform $\mathcal{M}_X$ is log-convex on $[0,\lambda_X]$.
If furthermore $X$ is non-deterministic, then strict  log-convexity holds.
\label{lcx}\end{proposition}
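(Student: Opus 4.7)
The plan is to deduce log-convexity directly from the Hölder inequality (\ref{UV}) that the authors have just recalled. Recall that log-convexity of $\mathcal{M}_X$ on $[0,\lambda_X]$ amounts to showing that for any $\lambda_1,\lambda_2\in[0,\lambda_X]$ and any $\theta\in(0,1)$,
$$
\mathcal{M}_X\bigl(\theta\lambda_1+(1-\theta)\lambda_2\bigr)\le \mathcal{M}_X(\lambda_1)^{\theta}\,\mathcal{M}_X(\lambda_2)^{1-\theta}.
$$
Since the endpoints $\lambda_1,\lambda_2$ lie in $\mathcal{D}_X$ and $\mathcal{D}_X$ is an interval (as pointed out just above the proposition via Lyapunov), the convex combination also lies in $\mathcal{D}_X$, so all three Mellin values are finite and the above inequality is meaningful.

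First I would write $X^{\theta\lambda_1+(1-\theta)\lambda_2}=UV$ with $U=X^{\theta\lambda_1}$ and $V=X^{(1-\theta)\lambda_2}$, and apply (\ref{UV}) with conjugate exponents $p=1/\theta$ and $q=1/(1-\theta)$. This gives
$$
\mathcal{M}_X\bigl(\theta\lambda_1+(1-\theta)\lambda_2\bigr)=\er[UV]\le \er[U^p]^{1/p}\,\er[V^q]^{1/q}=\er[X^{\lambda_1}]^{\theta}\er[X^{\lambda_2}]^{1-\theta},
$$
which is precisely log-convexity. Taking the logarithm (note that $\mathcal{M}_X>0$ on $[0,\lambda_X]$ unless $X\equiv 0$, a case that is trivial and deterministic) yields convexity of $\log\mathcal{M}_X$.

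For the strict part under the non-deterministic assumption, I would invoke the equality case of Hölder stated just after (\ref{UV}): equality forces the existence of constants $a,b\ge 0$, not both zero, such that $a\,U^p=b\,V^q$ almost surely, i.e.\ $a\,X^{\lambda_1}=b\,X^{\lambda_2}$ a.s. If $\lambda_1\neq\lambda_2$ this relation forces $X$ to be almost surely constant (on $\{X>0\}$ one gets $X^{\lambda_2-\lambda_1}=a/b$, and the atom at $0$ would also give a deterministic value), contradicting the assumption. Hence for $\lambda_1\neq\lambda_2$ and $\theta\in(0,1)$ the inequality is strict, which is exactly strict log-convexity.

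The proof is essentially a one-line application of Hölder; the only mild obstacle is the bookkeeping for strictness, namely excluding the degenerate case $X\equiv 0$ (so that taking logarithms is legitimate) and carefully translating the equality case of Hölder into the statement that $X$ must be deterministic. Once this is done, the whole argument stays entirely within the real interval $[0,\lambda_X]$, so no holomorphic extension or dominated convergence argument is needed.
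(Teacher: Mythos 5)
Your proof is correct, but it takes a genuinely different route from the paper's. The paper establishes log-convexity by differentiating twice: it computes $g''(\lambda)$ for $g(\lambda)=\log\er[X^\lambda]$ and identifies the numerator $\er[X^\lambda(\log X)^2]\,\er[X^\lambda]-\er[X^\lambda\log X]^2$ as a Cauchy--Schwarz discriminant, i.e.\ it applies (\ref{UV}) with $p=q=2$, $U=X^{\lambda/2}\log X$, $V=X^{\lambda/2}$, and gets strictness from the equality case. You instead verify the defining inequality of log-convexity directly, applying H\"older with exponents $p=1/\theta$, $q=1/(1-\theta)$ to $U=X^{\theta\lambda_1}$, $V=X^{(1-\theta)\lambda_2}$. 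Your version is more elementary and in one respect sharper: it works on all of $[0,\lambda_X]$ including the endpoints, and it avoids having to justify differentiating twice under the expectation sign (the paper's ``trivial computations'' tacitly use dominated convergence and only control the open interval $(0,\lambda_X)$). The paper's version, in exchange, hands the reader the explicit formula for $g''$, which is reused implicitly in Corollary \ref{nef}. One caveat applies equally to your strictness step and to the paper's: the equality relation $a\,X^{\lambda_1}=b\,X^{\lambda_2}$ a.s.\ with $\lambda_1\neq\lambda_2$ both positive forces $X$ to be constant on $\{X>0\}$ but does not rule out an extra atom at $0$ (e.g.\ $\pr(X=0)=\pr(X=1)=1/2$ gives $X^{\lambda_1}=X^{\lambda_2}$ a.s., a non-deterministic $X$, and $\mathcal{M}_X$ constant on $(0,\infty)$, so strict log-convexity genuinely fails on interior chords). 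This is a defect of the statement itself rather than of your argument, which closes cleanly once $\pr(X=0)=0$ or once one endpoint of the chord is $0$.
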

\begin{proof} Let $g(\lambda) :=\log \er[X^\lambda]$. Trivial computations lead to
$$g''(\lambda) = \frac{\er[X^\lambda \,(\log X)^2 ] \er[X^\lambda]- \er[X^\lambda \,\log  X]^2}{\er[X^\lambda]^2},\quad 0<\lambda <\lambda_X.$$
Taking $p=q=2$, $U=  X^\frac{\lambda}{2}\, \log X$ and $V=X^\frac{\lambda}{2}$ in (\ref{UV}), we deduce that $g$ is convex. It is  strictly  convex unless $U$ and $V$ are proportional which is equivalent to $X$ deterministic.
\end{proof}
Proposition \ref{lcx} gives an additional information:
\begin{corollary} Let a nonnegative random variable $X$ such that  $\lambda_X>0$. For every $\lambda \in (0,\lambda_X)$, the function   $t \mapsto \mathcal{M}_X(\lambda +t)/\mathcal{M}_X(t)$ is nondecreasing on $[0,\lambda_X -\lambda)$. It is further increasing whenever $X$ is non-deterministic.
\label{nef}\end{corollary}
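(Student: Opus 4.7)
The plan is to reduce the corollary to a standard property of (strictly) convex functions, exploiting the log-convexity established in Proposition \ref{lcx}. Writing $g(\lambda) := \log \mathcal{M}_X(\lambda)$, the quantity of interest factors as
$$\frac{\mathcal{M}_X(\lambda+t)}{\mathcal{M}_X(t)} \;=\; \exp\bigl(g(\lambda+t) - g(t)\bigr),$$
so, since $\exp$ is increasing, it suffices to prove that $h_\lambda(t) := g(\lambda+t) - g(t)$ is nondecreasing on $[0,\lambda_X-\lambda)$, and strictly increasing when $g$ is strictly convex.

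Next, I would invoke the following elementary fact about convex functions: if $g$ is convex on an interval and $\lambda > 0$ is fixed, then $t \mapsto g(\lambda+t) - g(t)$ is nondecreasing on the appropriate subinterval. The cleanest justification is the three-point (divided difference) characterisation of convexity: for any $s < t$ in $[0,\lambda_X-\lambda)$, one has $s < t$ and $\lambda+s < \lambda+t$ with the same length $t-s$, and the slope function $(a,b) \mapsto (g(b)-g(a))/(b-a)$, $a<b$, is nondecreasing in both variables when $g$ is convex. Comparing the chord on $[s,t]$ with the chord on $[\lambda+s,\lambda+t]$ yields
$$\frac{g(t)-g(s)}{t-s} \;\leq\; \frac{g(\lambda+t)-g(\lambda+s)}{t-s},$$
which rearranges precisely to $h_\lambda(s) \leq h_\lambda(t)$. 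Alternatively, since $\mathcal{M}_X$ is smooth in the interior of $\mathcal{D}_X$ by dominated convergence, one may simply write $h_\lambda'(t) = g'(\lambda+t) - g'(t) \geq 0$ because $g'$ is nondecreasing.

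For the strict statement, Proposition \ref{lcx} gives strict convexity of $g$ when $X$ is non-deterministic. Strict convexity turns the slope inequality above into a strict one (the chords on two distinct intervals of equal length cannot coincide unless $g$ is affine on their convex hull, which is excluded by strict convexity), so $h_\lambda(s) < h_\lambda(t)$ whenever $s < t$, and the ratio $\mathcal{M}_X(\lambda+t)/\mathcal{M}_X(t)$ is strictly increasing.

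I do not anticipate a serious obstacle: the entire content of the corollary is the combination of Proposition \ref{lcx} with the monotonicity of the slopes of a convex function. The only care required is to justify the reduction on the endpoints of $[0,\lambda_X-\lambda)$ and, if one wishes to avoid appealing to the interior smoothness of $\mathcal{M}_X$, to use the three-point characterisation of convexity rather than derivatives so that the argument applies on the whole half-open interval.
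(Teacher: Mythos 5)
Your proof is correct and follows essentially the same route as the paper's first proof: both reduce the statement to the log-convexity of $\mathcal{M}_X$ from Proposition \ref{lcx} combined with the monotone-slopes (three-point) characterisation of convex functions, yielding $g(\lambda+s)-g(s)\leq g(\lambda+t)-g(t)$. The only minor difference is in the strict case, where you use strict slope inequalities directly while the paper argues via differentiability and the non-vanishing of $g''$; your variant is, if anything, slightly cleaner since it avoids appealing to smoothness.
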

\begin{proof}   Theorem 5.1.1 \cite{webs} p. 194 says that convexity of $x \mapsto  g(x)=\log \mathcal{M}_X(x)$ yields that its slopes are nondecreasing:
$$\frac{g(y)-g(x)}{y-x} \leq\frac{g(z)-g(x)}{z-x}\leq \frac{g(z)-g(y)}{z-y},\quad 0\leq x<y<z<\lambda_X.$$
Then,
\begin{equation}
g(\lambda +s)-g(s)\leq g(\lambda +t)-g(t), \quad \mbox{for $\,0\leq s <t$  and   $\lambda+t < \lambda_X$},
 \label{eq}\end{equation}
and the first assertion is proved. For the strict monotonicity, notice that  equality holds in (\ref{eq}) only in case where the function $r \mapsto g(\lambda +r)-g(r)$ is not injective. Because of the differentiability of $g$,  the latter reads  $g'(\lambda +r)=g'(r)$ for some value of  $r$. The latter is possible only if $g'$ is not injective, that is $g''(x)=0$ for some value  of $x$ and the second statement in Proposition \ref{lcx} allows to conclude.
\end{proof}
Our result in Corollary \ref{nef} is  the same than the one stated in Lemma 3.1 in \cite{hs1}  when  $\lambda$ and $t$ are positive integers. Corollary \ref{nef} is  also proved in  \cite{pak1}, where the author  also adapts the arguments of Lemma 3.1 in \cite{hs1}. However, we found that the argument of continuity used in \cite{pak1},  appealing to a result of Kingman \cite{king},  does not fit his context. We clarify the approach of \cite{pak1}  with this second proof:
\begin{proof}[Second proof of Corollary \ref{nef}] We first show that  the sequence $u_n=\mathcal{M}_X(nu)$ satisfies
\begin{equation}
\frac{u_{n+m}}{u_n} \leq \frac{u_{n+m+1}}{u_{n+1 }},\quad \mbox{for every $n,\,m\in \mathbb{N}, \, u>0$}. \label{cru}
\end{equation}
Schwarz inequality (\ref{UV}), with $p=q=2$, gives  $ \er[X^{d(n+1)}]^2 \leq \er[(X^{u(n+2)}]  \er[X^{un}]$. Then
$$\frac{u_{n+1}}{u_n} \leq \frac{u_{n+2}}{u_{n+1}},\quad \mbox{for every $n\in \mathbb{N}$},$$ which is also equivalent to (\ref{cru}) from which we deduce that  for each $u>0$ and $m \in \mathbb{N}$, the sequence
\begin{equation}
n \mapsto \frac{\er[X^{u(n+m)}]}{\er[X^{un}]} \quad \mbox{is nondecreasing}.
\label{cru1}\end{equation}
Now, take $\lambda>0$ and $t>s>0$ with $s,t$ rationals  of the form $s=p/q$ and $t=k/l$ so that $pl<qk$.  Applying  (\ref{cru1}) with $u=\frac{\lambda}{ql}$, we obtain the inequality
$$\frac{\er[X^{\lambda(s+1)}]}{\er[X^{\lambda s}]}= \frac{\er[X^{\frac{\lambda}{ql}(pl+ql))}]}{\er[X^{\frac{\lambda}{ql} pl}]} \leq \frac{\er[X^{\frac{\lambda}{ql}(qk+ql)}]}{\er[X^{\frac{\lambda}{ql} qk}]} = \frac{\er[X^{\lambda(t+1)}]}{\er[X^{\lambda t}]}.$$
By continuity of the Mellin transform, we deduce that
$$\frac{\er[X^{\lambda(s+1)}]}{\er[X^{\lambda s}]}\leq \frac{\er[X^{\lambda(t+1)}]}{\er[X^{\lambda t}]}, \quad \mbox{for all $\lambda>0$ and all real numbers $s,\,t$ s.t.}\;\,  0<s<t.$$
The proof is finished by replacing the couple $(s,t)$ by  $(\frac{s}{\lambda}, \frac{t}{\lambda})$. Strict monotonicity is obtained as in the end of the first proof.
\end{proof}

We found that in the literature, many papers invoke the injectivity of the Mellin transform without a precise reference. For instance, an informal discussion in exercise  1.13 in \cite{CY} appeals to Chapter VI in Widder's book \cite{widder}  where we found Theorem 6a p. 243  stating the following:
\begin{quote}{\it If the Mellin transforms  of two  nonnegative random variables $X$ and $Y$ are well defined on some strip $\alpha< Re(z) < \beta$, and equal there, then $X\simdis Y$.}
\end{quote}
Widder's theorem could be improved by the following Lemma:
\begin{lemma} Let $X$ and $Y$ two nonnegative random variables such that their Mellin transforms are well defined on some interval $(\alpha, \beta)\subset \mathbb{R}$ and equal there, then $X\simdis Y$.
\label{mellin}\end{lemma}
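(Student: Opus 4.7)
The plan is to reduce the statement to Widder's Theorem~6a (quoted just above the lemma), which yields the same conclusion under the stronger hypothesis that $\mathcal{M}_X$ and $\mathcal{M}_Y$ agree on a vertical strip in the complex plane. The task is therefore to upgrade the equality on the real interval $(\alpha,\beta)$ to equality on the whole strip $\mathcal{S}:=\{z\in\mathbb{C}\,:\,\alpha<\mathrm{Re}(z)<\beta\}$, and I would achieve this by analytic continuation.

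First I would observe that $\mathcal{M}_X$ admits a natural candidate extension to $\mathcal{S}$: for any $z=a+\mathrm{i}b$ with $a\in(\alpha,\beta)$, one has $|x^z|=x^{a}$ for $x>0$, so the finiteness of $\er[X^{a}]$ provided by the hypothesis makes $\mathcal{M}_X(z):=\er[X^{z}]$ well-defined throughout $\mathcal{S}$, and likewise for $\mathcal{M}_Y$.

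Next I would show that these extensions are holomorphic on $\mathcal{S}$ by the classical Morera/Fubini argument. On any closed vertical substrip $\{\alpha'\le\mathrm{Re}(z)\le\beta'\}$ with $\alpha<\alpha'<\beta'<\beta$, the elementary bound $x^{\mathrm{Re}(z)}\le x^{\alpha'}+x^{\beta'}$ for $x>0$, combined with the finiteness of $\mathcal{M}_X(\alpha')$ and $\mathcal{M}_X(\beta')$---which follows from the Lyapunov inequality~(\ref{ab}), exactly as was used in the excerpt to show that $\mathcal{D}_X$ is an interval---supplies a $z$-uniform dominating function. This justifies Fubini's theorem in $\oint_T \mathcal{M}_X(z)\,dz=\er\bigl[\oint_T X^{z}\,dz\bigr]$ for an arbitrary closed triangle $T$ contained in the substrip; since the inner integral $\oint_T x^z\,dz$ vanishes for each fixed $x>0$ by Cauchy's theorem, Morera's theorem yields the holomorphy of $\mathcal{M}_X$ on $\mathcal{S}$.

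With holomorphy in hand, the identity principle closes the argument: $\mathcal{M}_X$ and $\mathcal{M}_Y$ are both holomorphic on the connected open set $\mathcal{S}$ and coincide on the real segment $(\alpha,\beta)$, which has accumulation points in $\mathcal{S}$; hence they agree throughout $\mathcal{S}$. Applying Widder's Theorem~6a then delivers $X\simdis Y$. The main technical step is the holomorphy of the extended Mellin transform, whose crux is the Lyapunov-type uniform domination on substrips; apart from that, the argument is a routine chaining of Morera, Cauchy, and the identity theorem.
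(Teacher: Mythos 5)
Your proof is correct, but it takes a genuinely different route from the paper's. You upgrade the real-interval equality to equality on the whole strip $\{\alpha<\mathrm{Re}(z)<\beta\}$ by the ordinary identity theorem, after establishing holomorphy of the extended Mellin transforms via the Morera--Fubini argument with the domination $x^{\mathrm{Re}(z)}\le x^{\alpha'}+x^{\beta'}$ on closed substrips, and then you invoke Widder's strip theorem. The paper instead reduces to $(\alpha,\beta)=(0,1)$ and imports a Blaschke-type uniqueness theorem (Corollary \ref{blas}, borrowed from \cite{aj} via Rudin): after the same holomorphic extension, the two functions are identified on the strip because they coincide along the sequence $1/k$, whose images in the unit disc violate the Blaschke condition. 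That machinery is designed for the genuinely harder situation where the data is only a sequence accumulating at the \emph{boundary} of the strip; since here equality is given on a whole interval, which has accumulation points \emph{interior} to the strip, your identity-principle argument is the more economical one. It also quietly sidesteps the boundedness hypothesis in Corollary \ref{blas}, which need not hold on the full open strip if $\er[X^\lambda]\to\infty$ as $\lambda$ approaches an endpoint — one would have to pass to a closed substrip there too. Both proofs ultimately rest on the same two pillars: the holomorphic extension of the Mellin transform to its strip of finiteness (your Lyapunov-inequality step, matching the paper's use of (\ref{ab})), and Widder's theorem for the final identification of the laws.
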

The proof of this Lemma is a technique borrowed from \cite{aj} and based on a Blaschke's theorem that allows to identify holomorphic functions given their restriction along suitable sequences:
\begin{theorem}[Blaschke,  Corollary  p. 312 in Rudin \cite{rudin}]
If $f$ is holomorphic and bounded on the open unit disc $D$,  if $z_1,z_2,z_3, \cdots$  are the zeros of $f$ in $D$ and if $\sum_{k=1}^\infty  (1 - |z_k|) = \infty,$ then $f(z) = 0$ for all $z \in D$.
\end{theorem}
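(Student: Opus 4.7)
The plan is to prove this classical Blaschke criterion via Jensen's formula, which relates the modulus of a holomorphic function on a circle to the distribution of its zeros inside. The strategy is by contradiction: assume $f\not\equiv 0$, derive an upper bound on $\sum_k (-\log|z_k|)$ in terms of $\sup_D |f|$, and then conclude that $\sum_k (1-|z_k|)<\infty$, contradicting the hypothesis. The elementary inequality $-\log x \geq 1-x$ on $(0,1]$ is the bridge between the two series.

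First I would reduce to the case $f(0)\neq 0$. If $f$ has a zero of order $m\geq 0$ at the origin, factor $f(z)=z^m g(z)$ where $g$ is holomorphic and bounded on $D$ with $g(0)\neq 0$; the nonzero zeros are unchanged and the hypothesis $\sum (1-|z_k|)=\infty$ is preserved, so it suffices to show $g\equiv 0$. Then, for each $r\in(0,1)$ avoiding the (at most countable) set $\{|z_k|\}$, Jensen's formula gives
$$\log|f(0)| + \sum_{k:|z_k|<r} \log\frac{r}{|z_k|} \;=\; \frac{1}{2\pi}\int_0^{2\pi}\log|f(re^{i\theta})|\,d\theta \;\leq\; \log M,$$
where $M=\sup_{z\in D}|f(z)|<\infty$. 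Rearranging yields $\sum_{k:|z_k|<r}\log(r/|z_k|) \leq \log(M/|f(0)|)$, uniformly in such $r$.

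Next, letting $r\to 1^-$ along a sequence disjoint from $\{|z_k|\}$, each term $\log(r/|z_k|)$ is nonnegative (for $r>|z_k|$) and increases to $-\log|z_k|$. Monotone convergence gives
$$\sum_{k=1}^\infty (-\log|z_k|) \;\leq\; \log\frac{M}{|f(0)|} \;<\; \infty.$$
Combining this with $-\log x \geq 1-x$ for $x\in(0,1]$ yields
$$\sum_{k=1}^\infty (1-|z_k|) \;\leq\; \sum_{k=1}^\infty (-\log|z_k|) \;<\;\infty,$$
contradicting the hypothesis. Hence $f\equiv 0$.

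The main obstacle is the rigorous application of Jensen's formula itself, whose standard justification combines the mean value property of harmonic functions with a careful removal of the logarithmic singularities introduced by the zeros in $|z|<r$, typically via the factorization $f = B\cdot h$ where $B$ is the \emph{finite} Blaschke product built from $z_1,\ldots,z_{n(r)}$ and $h$ is a nonvanishing holomorphic function on a neighborhood of the closed disc of radius $r$. Once Jensen's identity is available, the passage from $\log(r/|z_k|)$ to $1-|z_k|$ and the interchange of sum and limit are routine.
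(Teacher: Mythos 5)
Your proof is correct: the contradiction argument via Jensen's formula, the monotone passage $r\to 1^-$, and the inequality $-\log x\geq 1-x$ together give exactly the Blaschke condition, and the reduction $f(z)=z^m g(z)$ (with $g$ bounded by the maximum principle or a direct estimate on $|z|\geq 1/2$) handles a zero at the origin; only a small notational slip remains, namely that after this reduction Jensen's formula should be applied to $g$ rather than to $f$. The paper itself does not prove this statement — it is quoted as a known corollary from Rudin's book — and your argument is essentially the standard proof given there, so there is nothing to compare beyond noting that you have supplied the proof the paper delegates to the reference.
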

Using the  one-to-one mapping of the strip $S=\{z \in \mathds{C} , \; 0<Re(z)<1\}$ onto the open unit disc
$$z\mapsto \theta(z) = \frac{e^{\ii \frac{\pi}{2} z}-i}{e^{\ii \frac{\pi}{2} z}+i},$$
one can easily rephrase  Blaschke's theorem for function defined on the strip $S$:
\begin{corollary}\label{blas}
Two holomorphic functions on the strip $S$ are identical if their difference is bounded and if they coincide along a sequence  $\alpha_1,\alpha_2,\alpha_3, \cdots$ in $S$,
such that the series $\sum_k   \left(1 - \left|\frac{e^{\ii \frac{\pi}{2} \alpha_k}-i}{e^{\ii \frac{\pi}{2} \alpha_k}+i}\right|\right)$ diverge. For instance, the series diverge for the sequence  $\alpha_k=\frac{1}{k}, \; k\geq 1$.
\end{corollary}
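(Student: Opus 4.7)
The plan is to reduce this corollary to the classical Blaschke theorem on the unit disc by pullback along the conformal map $\theta$. First I would set $h = f-g$, which is by hypothesis a bounded holomorphic function on the strip $S$ that vanishes at every point of the sequence $\{\alpha_k\}$. Granted the preceding remark that $\theta$ realises a holomorphic bijection of $S$ onto the open disc $D$, the composition $H := h \circ \theta^{-1}$ is a bounded holomorphic function on $D$, whose zero set contains $\{\theta(\alpha_k)\}_{k\ge 1}$. The divergence hypothesis of the statement is precisely the Blaschke condition $\sum_k (1 - |\theta(\alpha_k)|) = \infty$, so the version of Blaschke's theorem recalled just before the corollary forces $H \equiv 0$ on $D$. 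Inverting the change of variables gives $h \equiv 0$ on $S$, that is, $f \equiv g$.

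For the explicit example $\alpha_k = 1/k$, it remains to verify that the resulting series diverges. Setting $w_k := e^{\ii\pi/(2k)}$, which lies on the unit circle so that $|w_k|^2 = 1$, a direct computation gives
$$|\theta(1/k)|^2 \;=\; \frac{|w_k - \ii|^2}{|w_k + \ii|^2} \;=\; \frac{1 - \sin(\pi/(2k))}{1 + \sin(\pi/(2k))}.$$
A first-order expansion as $k \to \infty$ then yields $1 - |\theta(1/k)| \sim \pi/(2k)$, and divergence of the harmonic series finishes the check.

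The only delicate point is the bijectivity and conformality of $\theta : S \to D$, inherited from the factorization as $z \mapsto e^{\ii\pi z/2}$ followed by a Cayley-type M\"obius transformation; once this is accepted, boundedness of $h$ on $S$ transfers automatically to boundedness of $H$ on $D$, and the entire proof reduces to a single application of the Blaschke theorem quoted above. I do not anticipate any deeper obstacle: the corollary is essentially a change-of-variables reformulation of Blaschke's result on the disc, and the main task in writing it up cleanly is to record the two computations (the image of $\theta$ and the asymptotic of $|\theta(1/k)|$) carefully enough that the Blaschke hypothesis is unambiguously met.
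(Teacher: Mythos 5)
Your reduction is exactly the one the paper intends (the paper gives no separate proof of the corollary beyond the remark introducing $\theta$), and your computation for the example is right: with $w_k=e^{\ii \pi/(2k)}$ one gets $|\theta(1/k)|^2=\bigl(1-\sin(\pi/(2k))\bigr)/\bigl(1+\sin(\pi/(2k))\bigr)$, hence $1-|\theta(1/k)|\sim \pi/(2k)$ and the series diverges. However, the point you explicitly flagged as ``the only delicate point'' and then accepted on faith is precisely where the argument is not sound as written. The map $z\mapsto e^{\ii \pi z/2}$ sends the strip $\{0<\mathrm{Re}(z)<1\}$ onto the open \emph{first quadrant} (argument in $(0,\pi/2)$, arbitrary positive modulus), not onto the upper half-plane; composing with the Cayley map $w\mapsto (w-\ii)/(w+\ii)$ therefore carries $S$ onto the lower half-disc $\{|w|<1,\ \mathrm{Im}(w)<0\}$, not onto $D$. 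Consequently $H=h\circ\theta^{-1}$ is a bounded holomorphic function on a half-disc only, and the quoted Blaschke theorem on $D$ does not apply verbatim. This slip is inherited from the paper's own remark, but a proof must close it.

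Two repairs work. Either replace the exponent by $e^{\ii \pi z}$, which is the standard conformal bijection of the strip onto the upper half-plane, so that the composed map does send $S$ onto $D$; the same computation then gives $1-|\theta(1/k)|\sim \pi/k$ and the argument goes through unchanged. Or keep $\theta$ as written and note that the zeros $\theta(1/k)=-\ii\,\cos(\pi/(2k))/(1+\sin(\pi/(2k)))$ lie on the negative imaginary radius and accumulate at $-\ii$, an interior point of the circular arc of the boundary of the half-disc; any conformal map of the half-disc onto $D$ extends smoothly (indeed bi-Lipschitz) near that arc, so $1-|\psi(\theta(1/k))|\asymp 1-|\theta(1/k)|$ and the Blaschke condition transfers. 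With either fix the corollary holds; without one, the step ``apply Blaschke's theorem on $D$ to $h\circ\theta^{-1}$'' is a genuine gap.
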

\begin{proof}[Proof of Lemma \ref{mellin}] It is enough to take  $(\alpha, \beta)=(0,1)$, to notice that both $\mathcal{M}_X$ and  $\mathcal{M}_Y$ extend holomorphically on the strip $S$  and to conclude with Corollary \ref{blas} since $\mathcal{M}_X$ and  $\mathcal{M}_Y$ coincide along the sequence $\alpha_k=k^{-1}, \; k\geq 2$ which is contained in $(0,1)$.
\end{proof}
\section{Mellin transform and  size biased laws}
Let  $X$ denote a non-deterministic nonnegative random variable.  For $t\in \mathcal{D}_X$, the size biased law of
order $t$ is denoted  $X_{(t)}$ and is a version of the  weighted law
\begin{equation}\label{weig}
\pr (X_{(t)} \in dx) =  \frac{x^t}{\er[X^t]} \pr (X  \in dx),\quad x\geq 0.
\end{equation}
Chebychev's association inequality  says that
$$\er[f(X)g(X)] \geq \er[f(X)]\,\er[g(X)], $$
whenever the expectations are well defined and $f,\,g$ are both
nondecreasing or nonincreasing real-valued functions.  Taking $f(u)=u^t$, $g(u)={\II}_{u>x}$, we see that $X_{(t)}  \geq_{st}   X$ for $t\geq 0$,  i.e.
\begin{equation}\pr(X_{(t)}  > x) = \frac{\er [X^t \,{\II}_{X>x}]}{\er[X^t]}\geq  \pr(X > x),\quad \forall t,\,x  \geq 0\,.
\label{asso}\end{equation}
Notice that the last stochastic inequality also justifies Corollary \ref{nef}, since the Mellin transform could be computed as
\begin{equation}
\er[X^\lambda]=\lambda \int_{_0}^\infty x^{\lambda -1} \, \pr(X > x)\, \dx,\quad \lambda \in (0,\lambda_X) ,
\label{rep}\end{equation}
whenever  the extremity $\lambda_X$ given by (\ref{rlx}) is positive.

We list some elementary properties for the size biased law of a r.v. $X$:

\noindent (P0)  For every $c>0$ and $t \in \mathcal{D}_X$, we have
$$X _{(0)}=  X  \quad \mbox{and}  \quad  (c X )_{(t)}= c X_{(t)}$$
\noindent (P1)  For every $\lambda, t$ such that
$t, \,t+\lambda \in \mathcal{D}_X$ and measurable bounded function $g$, we have
$$ \er[X_{(t)}^\lambda] = \frac{\er[X^{t+\lambda}]}{\er[X^t]}, \qquad \er[g(X_{(t)})] = \frac{\er[ X^t g(X)]}{\er[X^t]}.$$
\noindent (P2) For every $s,t \in \mathcal{D}_X$ such that $t+s \in \mathcal{D}_X$, we have
$$\left(X_{(s)}\right)_{(t)}\simdis X_{(s+t)}\simdis \left(X_{(t)}\right)_{(s)}.$$
\noindent(P3) For every $s,t$ such that $st \in \mathcal{D}_X$, we have
$$(X^s)_{(t)} \simdis  ( X_{(st)})^s.$$
\noindent (P4) For every  independent random variables  $X$, $Y$ and  $t \in \mathcal{D}_X \cap \mathcal{D}_Y$, we have
$$(X Y)_{(t)}\simdis X_{(t)}  Y_{(t)}\qquad (\mbox{assuming that $X_{(t)}$ and $Y_{(t)}$ are independent}).$$

\section{Convergence of sequences and families of Mellin transforms}\label{limel}

This section contains results  dealing with sequences or families of Mellin transforms. As we did for the injectivity in Lemma \ref{mellin}, we felt  it was important to also clarify the notion of convergence via Mellin transform. Next Theorem \ref{th2} and Proposition \ref{melib}, will be crucial for handling section \ref{limit}  below.\\

We recall that $\TT=\mathds{N}$ or $[0,\infty)$.  In what follows,
\begin{enumerate}
\item  a property $(\mathcal{P}_t)$ is said to be {\it true for $t$ big enough}, if there exists $t_{_0}\in \TT$ such that $(\mathcal{P}_t)$ is true for $t\geq t_{_0}$;

\item  ${(X_t)}_{t\in \TT}$ denotes a family of nonnegative random
variables indexed by the time $t\in \TT$;

\item  by a subsequence of ${(X_t)}_{t\in \TT}$ , we mean a collection of  random variables ${(X_{t(n)})}_{n\in \mathbb{N}}$ obtained through
a nondecreasing function $t:\mathbb{N}\to \TT$ such that $t(n)\to
\infty$ as $n\to \infty$;

\item  we always assume that for $t$ big enough,
$$\lambda_{X_t}=\sup \{\lambda  \in \mathds{R}, \; \er[X_t^\lambda]<\infty\} >0;$$

\item   for $\lambda\geq 0$, we define informally
\begin{equation}
m(\lambda):=\liminf_{t\in \TT} \er[X_t^{\lambda}] \quad \mbox{and}\quad M(\lambda):=\limsup_{t\in \TT} \er[X_t^{\lambda}].
\label{mM}\end{equation}
\end{enumerate}

We also recall some basic ingredients related to convergence in distribution.
 \begin{definition}[Billingsley \cite{Bil}]  Let a sequence ${(X_n)}_{n\in \mathds{N}}$ of real-valued random
variables.

 (i)  ${(X_n)}_{n\in \mathds{N}}$ is called  tight  if
 $$\sup_{n\in \mathds{N}}\mathbb{P}(|X_n|>x) \to 0 \quad \mbox{as}\quad x \to \infty.$$

 (ii)  ${(X_n)}_{n\in \mathds{N}}$ is called  uniformly integrable  if
$$\sup_{n\in \mathds{N}} \er[|X_n| {\II}_{X_n>x}] \to 0 \quad \mbox{as}\quad x \to \infty.$$

 (iii) $X_n$ converges in distribution to $X_\infty$ if $\er[f(X_n)] \to \er[f(X_\infty)]$, as $n\to \infty$, for every continuous bounded
(or continuous  compactly supported) real function $f$.
\label{bilc}\end{definition}
We are going to study convergence in distribution for families of nonnegative random variables. For this purpose, we   slightly generalize Definition \ref{bilc} in order to get more flexibility.
 \begin{definition} Let family ${(X_t)}_{t\in \TT}$ a family of nonnegative random variables.

  (i) We say that the family  ${(X_t)}_{t\in \TT}$    ultimately tight  if
 $$\limsup_{t\in \TT}\mathbb{P}(X_t >x) \to 0, \quad \mbox{as}\;\, x \to \infty.$$

(ii)  We say that the family ${(X_t)}_{t\in \TT}$  is  $\lambda$-uniformly integrable, and we denote  ${(X_t)}_{t\in \TT}$ is $\lambda-UI$,   if  $\lambda \in (0, \lambda_{X_t})$ for $t$ big enough  and
$$\limsup_{t\in \TT} \er[X_t^{\lambda} {\II}_{X_t>x}] \to 0, \quad \mbox{as}\;\, x \to \infty.$$

(iii) We say that the family ${(X_t)}_{t\in [0,\infty)}$ converge  in distribution, if every subsequence ${(X_{t(n)})}_{n\in \mathbb{N}}$ converges in distribution in the usual sense (iii) of the preceding Definition.
\label{def2}\end{definition}
 \begin{remark}{\rm We can notice that:

a) $\lambda-$uniform  integrability  of ${(X_t)}_{t\geq 0}$ is equivalent to $1-$uniform  integrability of ${(X_t^\lambda)}_{t\geq 0}$.

b) If $\,\TT=\mathds{N}$, then  Definitions  \ref{bilc} and \ref{def2} are equivalent  $0\lambda_{X_t} \in (0,\infty)$ for every $t \in \mathds{N}$. In general,  this is untrue if $\,\TT=[0,\infty)$.

c) If $\,\TT=[0,\infty)$, then  ${(X_t)}_{t\geq 0}$ is ultimately tight (respectively $ \lambda-UI$) if and only if there exists  some positive $t_{_0}$, big enough, such that ${(X_t)}_{t\geq t_{_0}}$ is tight (respectively ${(X_t^\lambda)}_{t\geq t_{_0}}$ is uniformly  integrable) in the same sense that Billingsley gave in Definition \ref{bilc}.
}\label{RUI} \end{remark}
We start this section with the following result that clarifies the link between ultimate tightness and uniform integrability:
\begin{proposition} Let $\lao>0$ and  ${(X_t)}_{t\in \TT}$ a family of nonnegative random variables. Recall the function $m(.)$ and $M(.)$ are given by (\ref{mM}).

\noindent 1) If ${(X_t)}_{t\in \TT}$ is  $\lao-$uniformly integrable, then $M(\lao)<\infty$.

\noindent 2) If $M(\lao)<\infty$, then ${(X_t)}_{t\in \TT}$ is ultimately tight and also $\lambda-$uniformly integrable for every $\lambda  \in(0,\lao)$.

\noindent 3) Assume $m(\lao)>0$ and $M(\lao+\epsilon)<\infty, \;$  for some $\,\epsilon>0$. Then $\lao$-uniform integrability of
${(X_t)}_{t\in \TT}$  is equivalent to its ultimate tightness.
\label{prop0}\end{proposition}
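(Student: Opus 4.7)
The plan is to treat the three items in turn, relying on two elementary tools: threshold splitting of expectations and the interpolation inequality
$$X_t^{\lambda}\,\II_{X_t>x} \;\leq\; x^{\lambda-\mu}\, X_t^{\mu}\,\II_{X_t>x},\qquad \lambda\leq \mu,\; x>0,$$
obtained by factoring $X_t^{\lambda}= X_t^{\lambda-\mu}X_t^{\mu}$ on $\{X_t>x\}$ and using $X_t^{\lambda-\mu}\leq x^{\lambda-\mu}$ there.

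For (1), I would use the definition of $\lao$-uniform integrability to pick $x_{_0}$ large enough that $\limsup_{t}\er[X_t^{\lao}\,\II_{X_t>x_{_0}}]<1$, hence some $t_{_0}\in\TT$ such that $\er[X_t^{\lao}\,\II_{X_t>x_{_0}}]\leq 1$ for $t\geq t_{_0}$. Splitting
$$\er[X_t^{\lao}] \;=\; \er[X_t^{\lao}\,\II_{X_t\leq x_{_0}}] + \er[X_t^{\lao}\,\II_{X_t>x_{_0}}] \;\leq\; x_{_0}^{\lao} + 1,\qquad t\geq t_{_0},$$
gives $M(\lao)\leq x_{_0}^{\lao}+1<\infty$.

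For (2), ultimate tightness is Markov's inequality: $\pr(X_t>x)\leq \er[X_t^{\lao}]/x^{\lao}$ yields $\limsup_t\pr(X_t>x)\leq M(\lao)/x^{\lao}\to 0$. For $\lambda$-uniform integrability with $\lambda\in(0,\lao)$, I would apply the interpolation inequality with $\mu=\lao$ to get
$$\er[X_t^{\lambda}\,\II_{X_t>x}] \;\leq\; x^{\lambda-\lao}\,\er[X_t^{\lao}],$$
whose $\limsup$ in $t$ is bounded by $x^{\lambda-\lao}\,M(\lao)$, which tends to $0$ as $x\to\infty$ since $\lambda-\lao<0$.

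Item (3) is then immediate from (1) and (2) in one direction, and from the interpolation inequality in the other. If $(X_t)$ is $\lao$-UI, then (1) and (2) applied to $\lao$ give ultimate tightness. Conversely, applying the interpolation inequality with $\mu=\lao+\epsilon$ produces
$$\er[X_t^{\lao}\,\II_{X_t>x}] \;\leq\; x^{-\epsilon}\,\er[X_t^{\lao+\epsilon}],$$
so that $\limsup_t\er[X_t^{\lao}\,\II_{X_t>x}]\leq M(\lao+\epsilon)/x^{\epsilon}\to 0$, which is $\lao$-uniform integrability.

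I do not foresee any real obstacle: the whole proposition is a careful packaging of Markov's inequality with Lyapunov-type interpolation on the truncated quantities $\er[X_t^{\lambda}\II_{X_t>x}]$. The hypothesis $m(\lao)>0$ in (3) does not seem to enter the proof of the equivalence as such; it reads as a nondegeneracy assumption useful for the intended applications in Section \ref{limit} rather than a structural necessity here. The only point that demands minor care is the bookkeeping of $\limsup$ over the continuous index set $\TT=[0,\infty)$, where one must routinely convert a statement "$\limsup_t a_t<c$" into "$a_t\leq c$ for $t$ eventually"; this is handled as above by choosing appropriate thresholds $x_{_0}$ and $t_{_0}$.
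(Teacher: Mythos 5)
Your proof is correct, and for parts (1) and (2) it is essentially the paper's argument: the paper also bounds $M(\lao)$ by splitting at a threshold $x$, and for (2) it derives the very same bound $\limsup_{t\in\TT}\er[X_t^{\lambda}\II_{X_t>x}]\leq M(\lao)/x^{\lao-\lambda}$, only via H\"older plus Markov rather than your pointwise factorization $X_t^{\lambda}\leq x^{\lambda-\lao}X_t^{\lao}$ on $\{X_t>x\}$; the two computations are interchangeable. Where you genuinely diverge is part (3). The paper proves the equivalence by sandwiching the truncated moment between the tail probability on both sides: the association inequality (\ref{asso}) gives $\er[X_t^{\lao}]\,\pr(X_t>x)\leq\er[X_t^{\lao}\II_{X_t>x}]$, and H\"older with the pair $(\lao,\lao+\epsilon)$ gives the upper bound $M(\lao+\epsilon)^{\lao/(\lao+\epsilon)}\limsup_t\pr(X_t>x)^{\epsilon/(\lao+\epsilon)}$; the hypothesis $m(\lao)>0$ is exactly what makes the lower bound usable to deduce tightness from uniform integrability. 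Your route instead observes that $M(\lao+\epsilon)<\infty$ already forces $\lao$-uniform integrability outright (this is your part (2) applied at level $\lao+\epsilon$), so that under the stated hypotheses both properties hold unconditionally and the equivalence is automatic; and the implication from UI to tightness follows from (1) and (2) without $m(\lao)>0$. Your reading is sound and arguably more transparent: it shows that $m(\lao)>0$ is dispensable for the equivalence as stated. What the paper's sandwich buys in exchange is a direct two-sided quantitative comparison between $\limsup_t\er[X_t^{\lao}\II_{X_t>x}]$ and $\limsup_t\pr(X_t>x)$, which relates the two notions to each other rather than deriving each separately from the moment bound.
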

\begin{proof} 1)   Write
$$M(\lao)\leq x^\lambda + \limsup_{t\in \TT}  \er[X_t^\lambda {\II}_{X_t>x}],$$
for $x$ big enough and deduce that $M(\lao)<\infty$.

\noindent 2) H\"older and Markov inequalities give
\begin{equation}
\limsup_{t\in \TT} \er[X_t^\lambda {\II}_{X_t>x}]\leq M(\lao)^{\frac{\lambda}{\lao}} \limsup_{t\in \TT} \pr(X_t>x)^{\frac{\lao-\lambda}{\lao}}\leq
\frac{M(\lao)}{x^{\lao-\lambda}},\quad 0<\lambda<\lao,
\label{thg}\end{equation}
Ultimate  tightness  and $\lambda$-uniform integrability are then immediate.

\noindent 3) Inequality (\ref{asso}) gives
$$ \er[X_t^{\lao}]  \pr(X_t>x) \leq \er[X_t^{\lao} {\II}_{X_t>x}],\qquad \mbox{for all $x>0$ and $t\in \TT$}.$$
Using again inequality (\ref{thg}), with the couple $(\lambda,\lao)$ replaced by $(\lao,\lao+\epsilon)$,  obtain
$$ m(\lao) \limsup_{t\in \TT} \pr(X_t>x) \leq \limsup_{t\in \TT}\er[X_t^{\lao} {\II}_{X_t>x}]\leq M(\lao+\epsilon)^{\frac{\lao}{\lao+\epsilon}}  \limsup_{t\in \TT} \pr(X_t>x)^{\frac{\epsilon}{\lao+\epsilon}}.$$
\end{proof}
\noindent Next Theorem rephrases and improves some results borrowed from the monograph of Billingsley \cite{Bil}:
\begin{theorem} Let   ${(X_t)}_{t\in \TT}$ a family of nonnegative random variables such that   $\lao \in (0, \lambda_{X_t})$, for some  $\lao >0$ and   $t$ big enough.

\noindent 1) Let $X_\infty$ a nonnegative random variable. The following assertions are equivalent, as $t\to \infty$:

(i)  $X_t \limdis X_\infty\,$  and ${(X_t)}_{t\in \TT}$ is $\lao$-uniformly integrable;

(ii)  $X_t \limdis X_\infty\,$, $\lao \in \mathcal{D}_{X_\infty}$ and $\er[X_t^\lao] \to  \er[X_\infty^\lao]\,$;

(iii) $\lao \in \mathcal{D}_{X_\infty}$  and  for every $\lambda \in [0, \lao]$,  $\er[X_t^\lambda] \to  \er[X_\infty^\lambda]$.

\noindent 2) Let  $ \lambda_1 \in(0, \lao)$ and  assume that   $\,\er[X_t^\lambda]$ converges as $t\to \infty$ to a well defined function $f(\lambda), \; \lambda \in [\lambda_1 ,\lao]$. Then (iii) holds  and $f$ is well defined on $[0,\lao]$ by $f(\lambda)=\er[X_\infty^\lambda]$.
\label{th2}\end{theorem}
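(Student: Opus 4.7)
For Part 1 I would prove the cyclic chain (i)$\Rightarrow$(ii)$\Rightarrow$(iii)$\Rightarrow$(i), using Proposition \ref{prop0} for the uniform integrability bookkeeping and Lemma \ref{mellin} to identify subsequential limits.

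The implication (i)$\Rightarrow$(ii) is the classical theorem that convergence in distribution combined with $\lao$-uniform integrability of $(X_t)$ forces $\er[X_t^{\lao}]\to\er[X_\infty^{\lao}]$, with the limit finite (so $\lao\in\mathcal{D}_{X_\infty}$). For (ii)$\Rightarrow$(iii), the convergence $\er[X_t^{\lao}]\to\er[X_\infty^{\lao}]$ yields $M(\lao)<\infty$, and Proposition \ref{prop0}(2) then supplies $\lambda$-uniform integrability of $(X_t)$ on every $\lambda\in(0,\lao)$; combined with $X_t\limdis X_\infty$ this gives $\er[X_t^\lambda]\to\er[X_\infty^\lambda]$ on $(0,\lao)$, while the endpoints $\lambda=0$ and $\lambda=\lao$ are trivial or assumed.

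The implication (iii)$\Rightarrow$(i) is where Mellin injectivity enters. From $\er[X_t^{\lao}]\to\er[X_\infty^{\lao}]<\infty$, Proposition \ref{prop0}(2) delivers ultimate tightness of $(X_t)$ together with $\lambda$-UI for every $\lambda\in(0,\lao)$. Tightness allows extracting, from any subsequence of $(X_t)$, a further distributional limit $\tilde X$; the $\lambda$-UI then transfers the moment identities of (iii) to $\tilde X$, yielding $\er[\tilde X^\lambda]=\er[X_\infty^\lambda]$ on $(0,\lao)$. Lemma \ref{mellin} applied on this interval forces $\tilde X\simdis X_\infty$, so uniqueness of the subsequential limit promotes the extraction to $X_t\limdis X_\infty$. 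The residual $\lao$-UI is the converse direction of the moment-convergence theorem: from $X_t\limdis X_\infty$ and $\er[X_t^{\lao}]\to\er[X_\infty^{\lao}]<\infty$, one extracts uniform integrability of $(X_t^{\lao})$ by comparing with the bounded continuous truncations $x\mapsto x^{\lao}\wedge K$ via the portmanteau theorem and passing $K\to\infty$ (using the elementary inequality $x\,\mathds{1}_{x>2K}\leq 2(x-K)^+$).

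Part 2 rests on the same mechanism. The convergence $\er[X_t^{\lao}]\to f(\lao)<\infty$ yields $M(\lao)<\infty$, hence ultimate tightness and $\lambda$-UI on $(0,\lao)$ via Proposition \ref{prop0}(2). Extracting from an arbitrary subsequence a distributional limit $X_\infty$ and using $\lambda$-UI, one obtains $\er[X_\infty^\lambda]=f(\lambda)$ on the non-degenerate interval $[\lambda_1,\lao)$; Lemma \ref{mellin} then pins down the law of $X_\infty$ uniquely, and the entire family converges to it in distribution. Fatou gives $\er[X_\infty^{\lao}]\leq f(\lao)<\infty$, securing $\lao\in\mathcal{D}_{X_\infty}$, and dominated convergence (via the pointwise bound $X^\lambda\leq 1+X^{\lao}$) allows passage $\lambda\to\lao^-$ in the identity $\er[X_\infty^\lambda]=f(\lambda)$, yielding $\er[X_\infty^{\lao}]=\lim_{\lambda\to\lao^-}f(\lambda)$. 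The main obstacle I anticipate is the final upgrade $\lim_{\lambda\to\lao^-}f(\lambda)=f(\lao)$, i.e.\ left-continuity of $f$ at the closed endpoint $\lao$: log-convexity of $f$ (inherited from the log-convexity of each $\mathcal{M}_{X_t}$ in Proposition \ref{lcx}) secures continuity on the interior, but at $\lao$ the step would need to exploit the strict inequality $\lao<\lambda_{X_t}$ assumed for $t$ large enough in order to rule out an upward jump of $f$ at the endpoint, once part 1 has been applied.
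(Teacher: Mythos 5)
Your Part 1 is correct and follows essentially the same route as the paper: a cyclic chain of implications in which Proposition \ref{prop0} supplies ultimate tightness and $\lambda$-UI from $M(\lao)<\infty$, Lemma \ref{mellin} identifies every subsequential distributional limit with $X_\infty$, and the sub-subsequence (Billingsley) argument upgrades this to convergence of the whole family. The only substantive difference is how you recover the residual $\lao$-uniform integrability from $X_t\limdis X_\infty$ together with $\er[X_t^{\lao}]\to\er[X_\infty^{\lao}]<\infty$: you compare with the bounded continuous truncations $x\mapsto x^{\lao}\wedge K$ and use $x\,\II_{x>2K}\le 2(x-K)_+$, whereas the paper integrates by parts, writing $\er[Z^{\lambda}\II_{Z\le x}]=x^{\lambda}\pr(Z\le x)-\lambda\int_0^x u^{\lambda-1}\pr(Z\le u)\,du$ and passing to the limit at continuity points of the limit distribution. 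Both are standard and valid; yours avoids the choice of continuity points.

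Part 2 is fine for $\lambda\in[0,\lao)$ and again mirrors the paper (Helly extraction, transfer of the moment identities to the limit via $\lambda$-UI, identification by Lemma \ref{mellin}, then promotion to the whole family). But the obstacle you flag at the endpoint $\lambda=\lao$ is a genuine gap, and the repair you sketch cannot work: neither log-convexity of $f$ nor the strict inequality $\lao<\lambda_{X_t}$ prevents an upward jump of $f$ at $\lao$. Take $\TT=\mathds{N}$, $\lao=1$, $\lambda_1=1/2$ and $\pr(X_n=n)=1/n=1-\pr(X_n=0)$; then $\lambda_{X_n}=\infty$ and $\er[X_n^{\lambda}]=n^{\lambda-1}\to f(\lambda)=0$ for $\lambda\in[1/2,1)$ while $f(1)=1$, yet $X_n\limdis 0$, so $\er[X_\infty]=0\neq f(1)$: the hypotheses of Part 2 hold and its conclusion fails at $\lao$. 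Mass escaping to infinity at rate exactly $t^{-\lao}$ is invisible to all moments of order $\lambda<\lao$ but not to the $\lao$-th, so no continuity argument can bridge the endpoint; one needs an extra hypothesis there ($\lao$-UI of the family, or left-continuity of $f$ at $\lao$), or the conclusion must be restricted to $[0,\lao)$. Be aware that the paper's own proof is silent on exactly this point: it invokes $\lambda$-UI of the subsequence for $\lambda$ up to and including $\lao$, whereas Proposition \ref{prop0} only delivers $\lambda$-UI for $\lambda$ strictly below $\lao$. So what you have located is a defect of the statement itself rather than merely of your argument.
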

\begin{proof} The proof is conducted by reasoning on subsequences.

\noindent 1) \underline{$(i)\Longrightarrow (iii)$}: it is a direct application of Theorem 25.12 p. 338 in \cite{Bil},  using Remark \ref{RUI}  and  Proposition \ref{prop0}.

\noindent  \underline{$(iii)\Longrightarrow (ii)$} is treated as follows: Since $M(\lao)<\infty$, then by Proposition \ref{prop0}, the family  ${(X_t)}_{t\in \TT}$ is ultimately tight. Lemma \ref{mellin} insures that   any subsequence ${(X_{t(n)})}_{n\in \mathbb{N}}$, if  converging in distribution  as $n \to \infty$, necessarily converge to the law of $X_\infty$.  Corollary in \cite{Bil} p.337  allows to conclude that $X_t \limdis X_\infty$ as $t \to \infty$.

\underline{$(ii)\Longrightarrow (i)$}: we use the following representation valid for any nonnegative random variables $Z$ such that $\er[Z^\lambda]<\infty$:
$$\er[Z^\lambda\II_{Z\leq x}]=x^\lambda \pr(Z\leq x)-\lambda \int_{_0}^x u^{\lambda -1}\, \pr(Z\leq u)\, du, \quad x\geq 0.$$
Choose $x_{_0}$  a continuity point of $u\mapsto \pr(X_\infty\leq u)$. By the dominated convergence theorem, we have
\begin{eqnarray*}
\lim_{t\to \infty}\er[X_t^\lambda\II_{X_t\leq x_{_0}}]&=&\lim_{t\to \infty}\left(x^\lambda \pr(X_t\leq x)-\lambda \int_{_0}^{x_{_0}} u^{\lambda -1}\, \pr(X_t\leq u) \, du\right)\\
&=& x^\lambda \pr(X_\infty\leq x_{_0})-\lambda \int_{_0}^{x_{_0}} u^{\lambda -1}\, \pr(X_\infty\leq u) \, du= \er[X_\infty^\lambda\II_{X_\infty\leq x_{_0}}]
\end{eqnarray*}
 Since $\lim_{t\to \infty}\er[X_t]=\er[X_\infty]$, we also have $\lim_{t\to \infty}\er[X_t^\lambda\II_{X_t > x_{_0}}]=\er[X_\infty^\lambda\II_{X_\infty > x_{_0}}]$,
 that is, for every $\epsilon >0$, there exists $t_{_0} \in\TT$ such   $\left|\er[X_t^\lambda\II_{X_t > x_{_0}}]-\er[X_\infty^\lambda\II_{X_\infty> x_{_0}}]\right|<\epsilon$.  Now choose   $\epsilon >0$, then $x_{_0}$ big enough so that $\er[X_\infty^\lambda\II_{X_\infty > x_{_0}}]<\epsilon$. We deduce that
$$\er[X_t^\lambda\II_{X_t > x}] \leq \er[X_t^\lambda\II_{X_t > x_{_0}}]<\er[X_\infty^\lambda\II_{X_\infty> x_{_0}}] +\epsilon <2\epsilon, \quad t\geq t_{_0},\;x\geq x_{_0}.$$

\noindent 2) We adapt a part of the proof of Corollary 1.6 p. 5 given in Schilling and al. \cite{SSV} in the context of convergence of sequences on completely monotone functions. Helly's selection theorem allows a shortcut since there exists a subsequence
 ${(X_{t(n)})}_{n\in \mathbb{N}}$  satisfying  $X_{t(n)} \limdis X_\infty$, as $n \to \infty$.

Fix  $\lambda \in [\lambda_1, \lao]$. For every function $h:[0,\infty) \to [0,1]$, compactly supported, we find, by Fatou Lemma, that
$$\er[h (X_\infty) \; X_\infty^\lambda]=\lim_{s\to \infty} \er[h (X_{t(n)}) \; X_{t(n)}^\lambda]\leq \lim_{s\to \infty} \er[ X_{t(n)}^\lambda]= f(\lambda).$$
Monotone convergence theorem gives a first inequality
$$\er[X_{\infty}^\lambda]=sup_{h}\er[h (X_\infty) \; X_{\infty}^\lambda]  \leq f(\lambda).$$
Now, fix $\epsilon >0$,  choose a continuity point  $x$ of the
distribution function of $ X_\infty$, then apply the fact that
$X_{t(n)} \limdis X_\infty$, identity (\ref{rep}) and the dominated
convergence theorem, in order to get that for $n$ big enough,
$$\er[ X_{t(n)}^\lambda\,{\II}_{ X_{t(n)}\leq x}] - \er[ X_\infty^\lambda\,{\II}_{ X_\infty\leq x}]=  \lambda\int_{_0}^x u^{\lambda -1} \left(\mathbb{P}(u<X_{t(n)}\leq x) - \mathbb{P}(u<X_\infty\leq x)\right)  du  \leq  x^\lambda\, \epsilon.$$
Since ${(X_{t(n)})}_{n\in \mathbb{N}}$ is $\lambda-UI$, then $\er[ X_{t(n)}^\lambda\,{\II}_{ X_{t(n)}> x}]<\epsilon$  for all $x,\,n$ big enough. Finally get for all $\epsilon>0$,
$$\er[ X_{t(n)}^\lambda] - \er[ X_\infty^\lambda]\leq  \er[ X_{t(n)}^\lambda\,{\II}_{ X_{t(n)}> x}]   + \er[ X_{t(n)}^\lambda\,{\II}_{ X_{t(n)}\leq x}] - \er[ X_\infty^\lambda\,{\II}_{ X_\infty\leq x}]< \epsilon(1+ x^\lambda).$$
The latter proves the second inequality $f(\lambda)=\lim_{s\to \infty} \er[ X_{t(n)}^\lambda]\leq \er[ X_\infty^\lambda]$. All in one, we have that
$$f(\lambda)= \er[ X_\infty^\lambda], \qquad \mbox{for every}\;\lambda \in [\lambda_1, \lao].$$
As in point 1) above, notice that  the family  ${(X_{t})}_{t\in \TT}$ is  ultimately tight, and  by Lemma \ref{mellin}, each subsequence of it, if converging in distribution, necessarily converge to the distribution of $X_\infty$. Use again the  Corollary in \cite{Bil} p. 337 in order to have $X_t \limdis X_\infty$, as $t\to \infty$. To conclude, use $\lambda$-uniform integrability of ${(X_{t})}_{t\in \TT}$  and then implication $(i) \Longrightarrow (ii)$ in point 1) above.
\end{proof}

Now, consider two families of nonnegative random variables ${(U_t)}_{t\in \TT}$ and ${(V_t)}_{t\in \TT}$  such that  $U_t\limdis U_\infty$,  $\,V_t\limdis V_\infty$ and  $U_t$ and $V_t$  independent for every $t\in \TT$. It then is trivial that  $U_t V_t\limdis U_\infty V_\infty$.   As a consequence of Theorem \ref{th2}, we deduce a kind of converse:
\begin{corollary} Let  ${(U_t)}_{t\in \TT}$ , ${(V_t)}_{t\in \TT}$ and ${(W_t)}_{t\in \TT}$ three families of nonnegative random variables such that
$U_t$ and $V_t$ are independent  for  each $t\in \TT$ and such that

\noindent (i)  the factorizations in law $ W_t \simdis U_t \,V_t $  holds;

\noindent (ii)  the  convergences in distribution  $W_t\limdis U_\infty\;$ and $\; V_t\limdis V_\infty \neq 0$ hold as $t\to \infty$;

\noindent (iii) there exists $\lao >0$ such that ${(W_t)}_{t\in \TT}$ is $\lao-UI$ or such that $\lim_{t\in \TT}\er[W_t^{\lao}]<\infty$.

Then, $U_t\limdis U_\infty$, where the distribution of the  random variable $U_\infty$ is well defined by its Mellin transform given by    $\er[U_\infty^\lambda]=\er[W_\infty^\lambda]/\er[V_\infty^\lambda]$, for every $\lambda  \in [0,\lao]$.
\label{XYZ}\end{corollary}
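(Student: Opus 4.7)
My plan is to reduce the claim to three ingredients: convergence of the Mellin transforms of $(W_t)$ on a suitable interval, ultimate tightness of $(U_t)$, and identification of all subsequential limits of $(U_t)$ via the factorization combined with the injectivity of the Mellin transform (Lemma \ref{mellin}).

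First, I would use hypothesis (iii) to upgrade $W_t \limdis W_\infty$ to convergence of Mellin transforms. If $(W_t)_{t\in \TT}$ is $\lao$-uniformly integrable, Theorem \ref{th2} part 1) immediately yields $\er[W_t^\lambda] \to \er[W_\infty^\lambda]$ for every $\lambda \in [0, \lao]$. If only $\lim_{t \in \TT} \er[W_t^{\lao}] < \infty$ is known, Proposition \ref{prop0} part 2) first gives $\lambda$-uniform integrability of $(W_t)$ for each $\lambda \in (0, \lao)$, and Theorem \ref{th2} part 1) then delivers the same convergence on every closed subinterval of $[0, \lao)$.

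Second, I would establish ultimate tightness of $(U_t)_{t\in \TT}$. Since $V_\infty$ is not a.s.\ zero, I can pick a continuity point $\epsilon > 0$ of its distribution function with $\delta := \pr(V_\infty > \epsilon) > 0$; then $\pr(V_t > \epsilon) \geq \delta/2$ for $t$ big enough. Using $W_t \simdis U_t V_t$ and independence,
\[
\pr(W_t > M) \;\geq\; \pr(U_t > M/\epsilon)\, \pr(V_t > \epsilon) \;\geq\; \frac{\delta}{2}\, \pr(U_t > M/\epsilon),
\]
so $\limsup_{t \in \TT} \pr(U_t > M/\epsilon) \leq (2/\delta)\, \limsup_{t \in \TT} \pr(W_t > M)$, which vanishes as $M \to \infty$ by ultimate tightness of $(W_t)$.

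Third, I would pass to subsequential limits and conclude. Any subsequence of $(U_t)$ admits, by ultimate tightness and Prokhorov, a further subsequence with $U_{t(n)} \limdis U_*$. Independence of $U_{t(n)}$ and $V_{t(n)}$ yields the joint convergence $(U_{t(n)}, V_{t(n)}) \limdis (U_*, V_\infty')$ with $V_\infty'$ independent of $U_*$ and distributed as $V_\infty$, and the continuous mapping theorem gives $W_\infty \simdis U_* V_\infty'$. Taking Mellin transforms on the interval from the first step, and using both independence and the fact that $\er[V_\infty^\lambda] \in (0, \infty)$ (positive because $V_\infty$ is not a.s.\ zero; finite because its product with $\er[U_*^\lambda]$ equals the finite $\er[W_\infty^\lambda]$ in the non-degenerate case), one obtains $\er[U_*^\lambda] = \er[W_\infty^\lambda]/\er[V_\infty^\lambda]$ on the whole interval. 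Lemma \ref{mellin} then pins down the distribution of $U_*$ uniquely; all subsequential limits coincide with some fixed $U_\infty$, and the Corollary in \cite{Bil} p.~337 concludes $U_t \limdis U_\infty$. The main obstacle is the second step: tightness of $(U_t)$ does not follow from that of $(W_t)$ alone, and the hypothesis $V_\infty \neq 0$ is precisely what prevents $V_t$ from concealing the mass of $U_t$ at infinity; the remainder is a standard assembly of the Mellin-transform machinery developed earlier in the paper.
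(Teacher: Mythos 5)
Your argument is correct, and it departs from the paper's proof at the one step that matters. The paper uses the very same event $\{U_t>x/v_{0},\,V_t>v_{0}\}$ that you use, but inserts it into the truncated moment rather than into a tail probability: from
\[
\er[W_t^{\lao}\,\II_{W_t>x}]\;\geq\;\er[U_t^{\lao}\,\II_{U_t>x/v_{0}}]\;v_{0}^{\lao}\;\pr(V_t>v_{0}),
\]
together with $\pr(V_t>v_{0})>1/4$ for $t$ large, it transfers the full $\lao$-uniform integrability from $(W_t)$ to $(U_t)$ and then invokes Theorem \ref{th2}. You transfer only the tail estimate, obtaining ultimate tightness of $(U_t)$, and compensate by identifying every subsequential limit $U_*$ through the limit factorization $W_\infty\simdis U_*V_\infty'$, Tonelli and Lemma \ref{mellin}. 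Both routes end with the same ``all subsequential limits coincide'' argument and the Corollary of \cite{Bil} p.~337 --- indeed the paper's terse ``then apply Theorem \ref{th2}'' conceals precisely the identification step you spell out, since Theorem \ref{th2}~1)(i) presupposes the convergence $U_t\limdis U_\infty$ being proved. What the paper's stronger transfer buys is $\lao$-uniform integrability of $(U_t)$ itself, hence the moment convergence $\er[U_t^\lambda]\to\er[U_\infty^\lambda]$ on $[0,\lao]$, which is exploited later (in the proof of Theorem \ref{th3}) though not claimed in the corollary; your weaker transfer suffices for the statement as written. One small repair: under the second alternative of (iii) your first step gives Mellin convergence of $(W_t)$ only on compact subintervals of $[0,\lao)$, so to reach the endpoint $\lambda=\lao$ in the identity $\er[U_\infty^{\lambda}]=\er[W_\infty^{\lambda}]/\er[V_\infty^{\lambda}]$ you should first note $\er[W_\infty^{\lao}]\leq\liminf_{t}\er[W_t^{\lao}]<\infty$ by Fatou and then read the identity at $\lao$ off the factorization $W_\infty\simdis U_\infty V_\infty$ directly.
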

\begin{proof} Notice that $\lao \in \mathcal{D}_{W_t} = \mathcal{D}_{U_t} \cap \mathcal{D}_{V_t}$ for $t$ big enough and that both conditions in $(iii)$ are equivalent by Theorem \ref{th2}. Choose $v_{_0}>0$ a continuity point of $x\mapsto \pr(V_\infty > x)$ such that $\pr(V_\infty > v_{_0})>1/2$ and notice also that for  every $t\in \TT$ and $x\geq 0$,
\begin{eqnarray*}\er[W_t^{\lao} {\II}_{W_t>x}]&\geq&\er[(U_t V_t)^{\lao} {\II}_{U_t>\frac{x}{v_{_0}},\,V_t>v_{_0} }]=\er[U_t^{\lao} {\II}_{U_t>\frac{x}{v_{_0}}}]\, \er[V_t^{\lao} {\II}_{V_t>v_{_0}}]\\
&\geq& \er[U_t^{\lao} {\II}_{U_t>\frac{x}{v_{_0}}}]\;
v_{_0}^{\lao} \;\pr(V_t>v_{_0}).
\end{eqnarray*}
Then use the fact that there exists $t_{_0} \in \TT$ such that $\pr(V_t>v_{_0})> \pr(V_\infty>v_{_0})-1/4>1/4$ for $t\geq t_{_0}$ and then
$$\er[W_t^{\lao} {\II}_{W_t>x}]\geq \frac{v_{_0}}{4}\, \er[U_t^{\lao} {\II}_{U_t>\frac{x}{v_{_0}}}], \quad t\geq t_{_0},\,x\geq 0.$$
The latter yields that the family  ${(U_t)}_{t\in \TT}$  is $\lao -UI$, then  apply Theorem \ref{th2}.
\end{proof}
Next proposition studies the convergence of biased laws and  improves  Theorem 2.3 in \cite{agk}:
\begin{proposition} Let  ${(X_t)}_{t\in \TT}$ a family of nonnegative random variables  such that  $X_t$ converges in distribution to a non-null random $X_\infty$. Suppose  $0<\lao < \min(\lambda_{X_t}\,,\, \lambda_{X_\infty})$ for $t$ big enough and  $\lim_{t \to \infty}\er[X_t^{\lao}] \to \er[X_\infty^{\lao}].$ Then, we have the convergence of the   size biased distributions  of ${(X_t)}_{t\in \TT}$:
\begin{equation}
(X_t)_{(\lambda)} \limdis (X_\infty)_{(\lambda)}, \quad \mbox{as $t \to \infty$}, \qquad \mbox{for every $\lambda\in [0,\lao]$}.
\label{bi}\end{equation}
\label{melib}\end{proposition}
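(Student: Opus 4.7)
The plan is to test convergence in distribution against an arbitrary bounded continuous $g:[0,\infty)\to\mathbb{R}$, using property (P1) of Section 3 to write
$$\er[g((X_t)_{(\lambda)})]=\frac{\er[X_t^\lambda\,g(X_t)]}{\er[X_t^\lambda]},\qquad \lambda\in[0,\lao],$$
and to take limits in the numerator and the denominator independently.

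The denominator is easy: the assumptions of the proposition are exactly clause $(ii)$ of Theorem \ref{th2} at the moment $\lao$, so by the equivalences established there the family $(X_t)_{t\in\TT}$ is $\lao$-uniformly integrable and $\er[X_t^\nu]\to\er[X_\infty^\nu]$ for every $\nu\in[0,\lao]$. In particular $\er[X_t^\lambda]\to\er[X_\infty^\lambda]$, with strictly positive limit since $X_\infty$ is non-null. For the numerator I would first transfer $\lao$-uniform integrability down to $\lambda$-uniform integrability via Proposition \ref{prop0}(2) (the endpoint $\lambda=\lao$ being covered directly by the hypothesis); this makes $(X_t^\lambda)_{t\in\TT}$ uniformly integrable in the classical Billingsley sense, and since $|X_t^\lambda g(X_t)|\le\|g\|_\infty X_t^\lambda$, the same holds for $(X_t^\lambda g(X_t))_{t\in\TT}$. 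The continuous mapping theorem combined with $X_t\limdis X_\infty$ yields $X_t^\lambda g(X_t)\limdis X_\infty^\lambda g(X_\infty)$, and Theorem 25.12 of \cite{Bil} then delivers $\er[X_t^\lambda g(X_t)]\to\er[X_\infty^\lambda g(X_\infty)]$. Taking the ratio gives $\er[g((X_t)_{(\lambda)})]\to\er[g((X_\infty)_{(\lambda)})]$, which is exactly (\ref{bi}).

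There is no serious obstacle once Theorem \ref{th2} is in hand; the real content is that all the needed uniform integrability is already packaged into the hypothesis. The point requiring the most care is the endpoint $\lambda=\lao$, where a more tempting Mellin-transform route — apply Theorem \ref{th2}(2) to the biased family $Y_t=(X_t)_{(\lambda)}$ after verifying $\er[Y_t^\mu]=\er[X_t^{\lambda+\mu}]/\er[X_t^\lambda]\to\er[Y_\infty^\mu]$ for $\mu\in(0,\lao-\lambda]$ — degenerates, since for $\lambda=\lao$ the interval collapses and no positive moments of $Y_t$ are known to be finite. This is precisely why I favour the direct test against bounded continuous functions, which treats the full range $\lambda\in[0,\lao]$ uniformly, the case $\lambda=0$ being trivial from property (P0).
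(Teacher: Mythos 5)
Your proof is correct and follows essentially the same route as the paper: both arguments test against a class of continuous functions, use property (P1) to write $\er[g((X_t)_{(\lambda)})]$ as the ratio $\er[X_t^{\lambda}g(X_t)]/\er[X_t^{\lambda}]$, and handle the denominator for $\lambda<\lao$ via Proposition \ref{prop0} together with Theorem \ref{th2}. The only divergence is in the numerator: the paper takes $g$ continuous and \emph{compactly supported}, so that $x\mapsto x^{\lambda}g(x)$ is itself bounded continuous and no uniform integrability is needed there, whereas you take $g$ merely bounded and therefore must invoke the $(ii)\Rightarrow(i)$ direction of Theorem \ref{th2} to secure $\lao$-uniform integrability at the endpoint $\lambda=\lao$ --- both routes are valid.
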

\begin{proof} a) We start by proving (\ref{bi}) for $\lambda=\lao$. By assumption, we have $0<\er[X_\infty^{\lao}]<\infty$. Convergence in distribution of $X_t$ to $X_\infty$ is equivalent to $\er[g(X_t)] \to \er[g(X_\infty)]$ for every continuous, compactly supported function $g$, as $t \to \infty$. The function $h(x)=|x|^{\lao} g(x)$ is also a continuous, compactly supported function. By property (P1), we also have
$$\er[h(X_t)]= \er[X_t^{\lao}] \er[g\left((X_t)_{({\lao})}\right)] \to \er[h(X_\infty)]=  \er[X_\infty^{\lao}] \er[g\left((X_\infty)_{({\lao})}\right)].$$
The limit (\ref{bi}) for $\lambda=\lao$ follows by simplification in both sides of the last limit.

b) By Proposition \ref{prop0}, notice that ${(X_t)}_{t\in \TT}$  is $\lambda-UI$ for every $\lambda\in (0,\lao)$. Deduce by Theorem \ref{th2} that  $\lim_{t \to \infty}\er[X_t^\lambda] \to \er[X_\infty^\lambda]$ for and reproduce step a) for $\lambda\in [0,\lao)$.
\end{proof}
\begin{remark}{\rm  By Theorem \ref{th2}, finiteness of the quantity $M(\lao)$ given by (\ref{mM}), or $\lao$-uniform integrability of ${(X_t)}_{t\in \TT}$ is sufficient to insure that $\lao \in  \mathcal{D}_{X_\infty}$.
}\end{remark}
\section{$\mathbf{t}$-monotone density functions}\label{tmono}
Let $x_+$ denotes $\max\{0,x\},\, x\in \mathds{R}$. The following definition extends the one of  Schilling et al.  \cite{SSV} given for $t$ nonnegative integer.
\begin{definition} Let $t \in (0, \infty)$. A function $f : (0,\infty) \to [0,\infty)$  is $t$-monotone if it is  represented by
\begin{equation}
f(x)= c+\int_{(0,\infty)} (u-x)_+^{t-1} \, \nu(du),\quad x>0
\label{tmo}\end{equation}
for some $c \geq 0$ and some measure $\nu$ on $(0,\infty)$.
\end{definition}
\begin{remark}{\rm  When $t=1$, representation (\ref{tmo}) holds if and only if $f$ is nonincreasing and right-continuous. When $t=n$ is an integer greater than or equal to 2, representation (\ref{tmo}) holds if and only if $f$  is $n-2$ times differentiable, $(-1)^j f^{(j)}(x) \geq 0$  for all $j= 0, 1, \cdots, n-2$ and $x>0$, and $(-1)^{n-2} f^{(n-2)}$ is nonincreasing and convex. Furthermore, by Theorem 1.11 p.8, \cite{SSV}), the couple $(c,\nu)$ in (\ref{tmo}) uniquely determines $f$.
}\end{remark}

A random variable    $\mathfrak{b}_{a,b}$ is said to have the beta distribution with parameter $(a,b), \, a,b>0,$ if it has the density function
$$\frac{1}{\beta(a,b)}\, x^{a-1}\,(1-x)^{b-1}, \, x\in (0,1)\quad \mbox{with}\quad   \beta(a,b)=\frac{\Gamma(a)\Gamma(b)}{\Gamma(a+b)}.$$
A random variable    $\mathfrak{g}_a$ is said to have the Gamma distribution with parameter $a>0,$ if it has the density function
${\Gamma(a)}^{-1}\,{x^{a-1}}\,e^{-x},\;x\in (0,\infty).$
It is well known that
\begin{equation}
\mathfrak{g}_a \simdis \mathfrak{b}_{a,b} \,\mathfrak{g}_{a+b} \quad
\mbox{and}\quad \mathfrak{b}_{a,b+c}\simdis \mathfrak{b}_{a,b}\,
\mathfrak{b}_{a+b,c}, \quad \mbox{for all} \;  a,b,c>0,   \,,
\label{algb}\end{equation} where in the first (respectively second)
identity $\mathfrak{b}_{a,b}$ and $\mathfrak{g}_{a+b}$ (respectively
$\mathfrak{b}_{a,b}$ and $\mathfrak{b}_{a+b,c}$)  are assumed to be
independent. Biasing on Beta and Gamma variables is nicely expressed by
\begin{equation}(\mathfrak{b}_{a,b})_{(t)}\simdis \mathfrak{b}_{a+t,b},  \quad\mbox{and} \quad (\mathfrak{g}_{a})_{(t)}\simdis \mathfrak{g}_{a+t}, \quad \mbox{for all} \; t>0.
\label{bia}\end{equation}
In the sequel, we denote $\mathfrak{b}_t$ the random
variable defined by $\mathfrak{b}_0=1$ and $\mathfrak{b}_{1,t}$ if $t>0$, that is
$$\mathfrak{b}_t \; \mbox{has the density function}\; t (1-x)^{t-1}, \, x\in (0,1), t>0.$$
Also, $\mathfrak{e} = \mathfrak{g}_1$ denotes a random variable with standard exponential  distribution.
It is clear that
\begin{equation}
\mathfrak{b}_t \simdis 1- e^{- \textstyle \frac{\mathfrak{e}}{t}}\qquad
\mbox{and that}\qquad  t \mathfrak{b}_t \,\limdis \, \mathfrak{e}, \quad
\mbox{as}\; t \to \infty. \label{bb} \end{equation}

We propose the following characterization for  $\mathbf{t}$-monotone densities.
\begin{proposition} Let $t \in (0, \infty)$.

\noindent  1) The density function  $f : (0,\infty) \to [0,\infty)$  of a positive random variable $Z$, is $t$-monotone, if and only if there exists a positive random variable $Y_t$ such that  $f$ is  represented
\begin{equation}
f(x)= t \int_{(0,\infty)} \left(1-\frac{x}{u}\right)_+^{t-1} \,  \frac{\mathbb{P}(Y_t \in du)}{u},\quad x>0.
\label{reph}\end{equation}
This equivalent to the factorization in law $\,Z \simdis \mathfrak{b}_t \, Y_t,  \,$ where $\mathfrak{b}_t$ has the beta distribution as in (\ref{bb}) and is independent from $Y_t$.

\noindent  2) If $f$ is $t$-monotone, then it is also $s$-monotone for every $s \in (0, t)$.

\noindent  3) Furthermore, the $\nu$-measure associated to $f$ through (\ref{tmo}) is finite if and only if there exists a positive random variable $X$ such that $Y_t $ has the same distribution as the size biased random variable $X_{(t)}$ given by (\ref{weig}) i.e.
\begin{equation}
Z \simdis \mathfrak{b_t} \,X_{(t)}.
\label{beta}\end{equation}
\end{proposition}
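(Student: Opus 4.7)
The plan is to handle the three parts in order, with part (1) being the main substantive step and parts (2), (3) following quickly from it.

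For part (1), I would first observe that since $f$ is a density, we must have $c=0$ in (\ref{tmo}), otherwise $\int_0^\infty f(x)\,dx = +\infty$. Applying Fubini to the resulting integral and using $\int_0^\infty (u-x)_+^{t-1}dx = u^t/t$ shows that $\int_{(0,\infty)} u^t \,\nu(du) = t$, so that the formula $\pr(Y_t\in du) := (u^t/t)\,\nu(du)$ defines a genuine probability measure on $(0,\infty)$. Factoring $(u-x)_+^{t-1} = u^{t-1}(1-x/u)_+^{t-1}$ in (\ref{tmo}) then yields representation (\ref{reph}) after substituting $\nu(du) = (t/u^t)\pr(Y_t\in du)$. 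The converse direction (given (\ref{reph}), $f$ is $t$-monotone) is the same identity read backwards. Finally, to identify (\ref{reph}) with the factorization $Z\simdis \mathfrak{b}_t Y_t$, I compute the density of a product of independent variables via $g(x)=\int y^{-1}\,h(x/y)\,\pr(Y_t\in dy)$ where $h(v)=t(1-v)^{t-1}$ is the density of $\mathfrak{b}_t$ from (\ref{bb}); after restricting to $\{x<y\}$ this gives exactly the right-hand side of (\ref{reph}).

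For part (2), the key is the Beta-product identity. Taking $a=1,\,b=s,\,c=t-s$ in the second identity of (\ref{algb}) gives $\mathfrak{b}_{1,t}\simdis \mathfrak{b}_{1,s}\,\mathfrak{b}_{1+s,t-s}$, i.e.\ $\mathfrak{b}_t\simdis\mathfrak{b}_s\,\mathfrak{b}_{1+s,t-s}$ with independence on the right. Combining this with part (1), we obtain $Z\simdis \mathfrak{b}_t Y_t \simdis \mathfrak{b}_s(\mathfrak{b}_{1+s,t-s}\,Y_t)$ where the bracketed factor is independent of $\mathfrak{b}_s$. Setting $Y_s := \mathfrak{b}_{1+s,t-s}\,Y_t$ and invoking the reverse direction of part (1) at level $s$ yields $s$-monotonicity of $f$.

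For part (3), the identification $\nu(du)=(t/u^t)\,\pr(Y_t\in du)$ shows that $\nu$ is finite iff $\er[Y_t^{-t}]<\infty$. Assuming this, define a probability measure $\pr(X\in du):=\pr(Y_t\in du)/(u^t\,\er[Y_t^{-t}])$; this is normalized by construction and satisfies $\er[X^t]=1/\er[Y_t^{-t}]\in(0,\infty)$. A direct check against (\ref{weig}) gives $X_{(t)}\simdis Y_t$, whence (\ref{beta}). Conversely, if $Y_t\simdis X_{(t)}$ for some $X$, then $\er[Y_t^{-t}]=\er[X_{(t)}^{-t}]=1/\er[X^t]<\infty$ by property (P1), so $\nu$ is finite.

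I do not anticipate a genuine obstacle: the computations are bookkeeping on measures. The step requiring the most care is verifying that the measure swap $\nu\leftrightarrow \pr(Y_t\in\cdot)$ in part (1) is a true bijection (this is where the normalization $\int u^t\,\nu(du)=t$ is essential) and that independence is preserved in the Beta-decomposition of part (2).
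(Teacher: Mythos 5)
Your proposal is correct and follows essentially the same route as the paper: kill the constant $c$, renormalize $u^{t}\nu(du)/t$ into the law of $Y_t$, identify (\ref{reph}) with the Mellin convolution giving the density of $\mathfrak{b}_t\,Y_t$, use the Beta identity $\mathfrak{b}_t\simdis\mathfrak{b}_s\,\mathfrak{b}_{1+s,t-s}$ for part (2), and note $\er[Y_t^{-t}]=t^{-1}\nu(0,\infty)$ for part (3). Your write-up is in fact slightly more explicit than the paper's (e.g.\ the Tonelli computation $\int_0^\infty(u-x)_+^{t-1}\,dx=u^t/t$ and the verification $X_{(t)}\simdis Y_t$), but there is no substantive difference.
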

\begin{remark} {\rm Bernstein characterization for completely monotone functions says that a  function $f:(0,\infty)\to \mathds{R}$    is $n$-monotone for every $n\in \mathds{N}$, if and only if it is represented as the Laplace transform of a (unique) Radon measure $\nu$ on $[0,\infty)$:
\begin{equation}\label{lap}
f(x)=\int_{[0,\infty)} e^{-x u} \nu (\du), \quad \lambda >0\;\; \mbox{(respectively    $\; x \geq0$)};
\end{equation}
When $f$ is a density function associated to a positive random variable $Z$, the latter is equivalent to $Z\simdis \,\mathfrak{e} Y$ where $Y$ is positive and independent from the exponentially distributed random variable $\mathfrak{e}$ and also to $f(x)=\er [e^{-x/Y}/Y], \, x>0$. In this case, Bernstein characterization for $f$ could be reinterpreted as follows:

\noindent - use  the Beta-Gamma algebra (\ref{algb}) in order to write
\begin{equation}
Z\simdis \mathfrak{e}  \simdis \mathfrak{b}_t  \, \mathfrak{g}_t Y, \quad \pr(Z>x)=\er \left[\left(1-\frac{x}{ \mathfrak{g}_t Y}\right)_+^t\right], \quad \mbox{for every}\; t \,x>0;
\label{zey} \end{equation}
\noindent -  use the fact that   $(1-\frac{x}{t})_+^t \to e^{-x}$ uniformly in $x>0$, as $t\to \infty$, and that $\frac{\mathfrak{g}_t}{t} \limdis 1$, then rephrase (\ref{zey})   as:
$$\pr(Z>x)= \lim_{n\to \infty}\er \left[\left(1- \frac{t}{\mathfrak{g}_t}\,\frac{x}{t Y }\right)_+^n\right]= \er[e^{-x/Y}]=\pr(\mathfrak{e}\,Y>x), \quad \mbox{for every} \,x>0. $$
This clarifies the discussion made  right after Proposition 2.2 in \cite{ll}.
}\end{remark}
\begin{proof}[Proof of Proposition \ref{beta}] 1) The density function  $f$ is of the form (\ref{tmo}) if and only if $c=0$ and
$$f(x)=\int_{(0,\infty)} (1-\frac{x}{u})_+^{t-1} \, u^{t-1}\nu(du),\quad x>0,$$
some measure $\nu$ on $(0,\infty)$ such that  $1=\int_{_0}^\infty f(x) dx = \int_{(0,\infty)} t^{-1} u^{t} \,  \nu(du)$ so that $t^{-1} u^{t} \, \nu(dx)$ is a probability measure associated to some random variable, say $Y_t$. The second assertion  is due to the fact that the density of the independent product of a  non negative random variables $U$ and $V$ such that $U$ has a density function $f_U$ is given by the Mellin convolution
$$f_{UV}(x)= \int_{(0,\infty)} f_U(\frac{x}{y})  \,  \frac{\mathbb{P}(V \in dy)}{y},\quad x>0.$$
2) It is enough to use the Beta-algebra (\ref{algb}):  $\mathfrak{b}_t\simdis \mathfrak{b}_s \mathfrak{b}_{1+s,t-s}$.

\noindent 3)  $\er[Y_t^{-t}]= t^{-1} \nu(0,\infty) <\infty$  is equivalent to the identity $Y_t\simdis X_{(t)}$ where $X$ has the distribution $\nu(0,\infty)^{-1} \nu(x)$.
\end{proof}
\section{Application: Refinement of the results of Harkness and Shantaram \cite{hs1}} \label{limit}

Recall that $\TT=\mathds{N}$ or $[0,\infty)$. In what follows $X$  is a nonnegative random variable, non identically null, such that $[0,\infty)\subset \mathcal{D}_X$ i.e. $X$ has moments of all positive orders.

We are willing to obtain a  limit theorem for the family obtained by size biasing the distribution of $X$. Identity (\ref{beta}) suggests to introduce a family of random variables ${(\mathcal{E}_t(X))}_{t \in \TT}$, such that each $\mathcal{E}_t(X)$ has its distribution defined through an operator $\mathcal{E}_t$ stemming from the identity
\begin{equation}
\mathcal{E}_t(X):\simdis \mathfrak{B_t} \,X_{(t)}, \quad \mbox{with with $\mathfrak{B_t}$ independent from $X_{(t)}$.}
\label{beta1}\end{equation}
By property (P2) and identity (\ref{algb}) for Beta distributions, notice that the family ${(\mathcal{E}_t)}_{t \in \TT}$ forms a semigroup of commuting operators:
$$\mathcal{E}_t (\mathcal{E}_s(X))\simdis \mathcal{E}_t (\mathcal{E}_s(X))\simdis \mathcal{E}_{t+s}(X),\quad s,\,t\in \TT.$$
By simple computations, we obtain that identity (\ref{beta1}) is equivalent to one of the following expressions for the Mellin transform or for the distribution function of $\mathcal{E}_t(X)$:
$$\frac{\er[\mathcal{E}_t(X)^\lambda]}{\Gamma(\lambda+1)}= \frac{\Gamma(t+1)}{\er [X^t]}  \frac{\er [X^{\lambda+t}]}{\Gamma(\lambda+t+1)}=
\frac{\Gamma(t+1)}{\Gamma(\lambda+t+1)}   \er \left[X_{(t)}^{\lambda}\right] ,\quad \lambda \geq 0,$$
$$\pr(\mathcal{E}_t(X)>x)=\er \left[(1-\frac{x}{X_{(t)}})_+^t\right]= \frac{\er [(X-x)_+^t]}{\er [X^{t}]}=\frac{t}{[X^{t}]}\int_{_x}^\infty (u-x)^{t-1} \pr(X>u) \,du,\quad x \geq 0,$$
so that
$$\pr(\mathcal{E}_1(X)\leq x)=\frac{1}{\er [X]}\int_{_0}^x \pr(X>u) \,du,\quad x \geq 0.$$
It is then clear that the operator $\mathcal{E}_1$ corresponds to the stationary excess operator studied by  Harkness and Shantaram \cite{hs1} and also in \cite{hs2,pak1,pak2,vard}. It is also seen that the operator $\mathcal{E}_n$ corresponds the $n$-th iterate by  the composition of $\mathcal{E}_1$:
$$ \mathcal{E}_{n+1}=\mathcal{E}_1 \circ  \mathcal{E}_{n},\quad n\in \mathds{N}\rsetminus \{0\}.$$

Harkness and Shantaram \cite{hs1} solved the discrete time problem ($\TT=\mathds{N}$) of:

- finding a  deterministic normalization  speed $c_t, \; t \in \mathbb{N}$, and sufficient conditions such that
\begin{equation}
Z_t:=\frac{\mathcal{E}_t(X)}{c_t}   \; \limdis \;Z_\infty\quad \mbox{as}\; t\in \mathds{N} \;  \mbox{and}\;t\to \infty.
\label{pb0}\end{equation}

- describing the set of possible  distributions for $Z_\infty$.

\noindent  It is natural to study what kind of additional information we can recover from the continuous time problem,  i.e. convergence (\ref{pb0}) in case $\TT=[0,\infty)$ instead of $T=\mathds{N}$, and to find the necessary and  sufficient conditions such that
\begin{equation}
Z_t \simdis \frac{1}{c_t} \mathfrak{b}_t \,X_{(t)} \; \limdis \;Z_\infty\quad \mbox{when $t\in \TT$ and}\; t \to \infty.
\label{pb}\end{equation}
A direction for solving this problem  is given by (\ref{bb}): we have that $t \mathfrak{b}_t\limdis \mathfrak{e}$ as $t\to \infty$. Take in Corollary \ref{XYZ}
$$(U_t,V_t,W_t)=(t \, \mathfrak{b}_t,\frac{X_{(t)}}{\rho_t}, Z_t), \quad \mbox{with}\; \rho_t= t\,c_t$$ and assume $\er[Z_\infty^{\lao}]$ is finite for some $\lao\in \mathds{T}\rsetminus\{0\}$. Under the last assumption, it could be noticed that problem (\ref{pb})  is equivalent to finding  necessary and  sufficient conditions on the deterministic and positive  normalization speed $\rho_t$, such that
\begin{equation}
X_t :=\frac{X_{(t)}}{\rho_t} \; \limdis \;X_\infty\quad \mbox{when $t\in \TT$ and}\; t \to \infty,
\label{pb1}\end{equation}
and such that $\er[X_\infty^{\lao}]$ is finite for some $\lao\in \mathds{T}\rsetminus\{0\}$. The random variable  $Z_\infty$ in (\ref{pb}) is then linked to $X_\infty$  by
\begin{equation} Z_\infty\simdis \mathfrak{e}\,X_\infty, \quad \mbox{where $\mathfrak{e}$ is exponentially distributed, independent from $X_\infty$.}
\label{zex} \end{equation}
Theorems \ref{th3} and \ref{th4} below, improve  the discrete time problem in (\ref{pb}, case $\TT=\mathds{N}$) studied by Harkness and Shantaram \cite{hs1}, by giving  a sharper answer through the continuous time problem  in (\ref{pb}, \ref{pb1}, case $\TT=[0,\infty)$).
\begin{theorem}[A normal limit theorem] Let  ${(X_t)}_{t\in \TT}$  the family  given by (\ref{pb1}).

\noindent 1) Assertions (i)-(ii)-(iii) are equivalent as $t\to \infty$:

(i)  $X_t$ converges in distribution to a non-null and nonnegative random variable  $X_\infty$  and
$$\lim_{t\to \infty}\er[X_t^{\lao}]=\er[X_\infty^{\lao}]<\infty, \quad \mbox{for some}\;\lao\in \TT \rsetminus\{0\};$$

(ii) $X_t$ converges in distribution to a non-null and nonnegative random variable $X_\infty$  and
$$\limsup_{t\to \infty}\frac{\rho_{t+s}}{\rho_t}<\infty, \quad \mbox{for some}\; s\in \TT \rsetminus\{0\};$$

(iii) there exists a  non-null and nonnegative random variable $X_\infty$ such that $[0,\infty) \in \mathcal{D}_{X_\infty}$ and
$$\er[X_t^\lambda]\to\er[X_\infty^\lambda], \quad \mbox{for all}\;\lambda \in [0,\infty).$$
\noindent 2) In this case, necessarily, $\rho_t \siminfini \er[X_\infty] \er[X^{t+1}]/\er[X^t]$ and there exists $c\geq 0$ such that
\begin{equation}
 l(s):=\lim_{t\to \infty}\frac{\rho_{t+s}}{\rho_t}= e^{c s},\quad \mbox{for every $s\in \TT$}.
 \label{rts} \end{equation}
3) Assume one of the equivalent assertions in 1) and let $c$ given by (\ref{rts}). Choose  $a:=\log \er[X_\infty]$ and let $N$ a random variable  normally distribution with mean $a-\frac{c}{2}$ and variance $c$ (it is understood that $N=a$ when $c=0$). Then the following holds:

(i) if $\,\TT=\mathds{N}$, then the law of the random variable $X_\infty$ is not determined by its integer moments, we only have
\begin{equation}
\er[X_\infty^\lambda]=\er[e^{\lambda N}]=e^{(a-\frac{c}{2})\lambda + \frac{c}{2}\lambda^2}, \quad \mbox{\it for all} \;\lambda \in \mathds{N};
 \label{normal} \end{equation}

(ii)  if $\,\TT=[0,\infty),\;$  then  $\;\log X_\infty \simdis N$, i.e. (\ref{normal}) holds for all $\lambda \in [0,\infty)$.

(iii) We have the identity in law
\begin{equation}
(X_\infty)_{(s)}\simdis  e^{cs} X_\infty,\quad \mbox{\it for every} \;s\in \TT.
\label{xs}\end{equation}
\label{th3}\end{theorem}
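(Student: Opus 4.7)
The lever is the factorization identity, immediate from (P0) and (P2):
$$(X_t)_{(s)} \simdis \frac{\rho_{t+s}}{\rho_t}\,X_{t+s}, \qquad t,s\in\TT, \qquad (\star)$$
equivalently $\er[X_t^{s+\lambda}]/\er[X_t^s] = (\rho_{t+s}/\rho_t)^\lambda\,\er[X_{t+s}^\lambda]$ at the Mellin level. My plan is to cycle (iii)$\Rightarrow$(i)$\Rightarrow$(ii)$\Rightarrow$(iii) using this identity together with Sections \ref{limel}--\ref{tmono}, then read off the asymptotic of $\rho_t$ and the multiplicative form of $l(s)$, and finally solve the functional equation for $\phi(\lambda):=\log\er[X_\infty^\lambda]$.

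\emph{Equivalences of Part 1.} (iii)$\Rightarrow$(i) is immediate from Theorem \ref{th2}(1) by taking any $\lao\in\TT\setminus\{0\}$. For (i)$\Rightarrow$(ii), Proposition \ref{melib} gives $(X_t)_{(\lao)}\limdis (X_\infty)_{(\lao)}$, so identity $(\star)$ with $s=\lao$ combined with the tail-shift $X_{t+\lao}\limdis X_\infty$ forces any subsequential limit $L$ of $\rho_{t+\lao}/\rho_t$ to satisfy $LX_\infty\simdis (X_\infty)_{(\lao)}$. Since $X_\infty$ is non-null, $L=:l(\lao)\in(0,\infty)$ is uniquely determined, giving (ii). The converse (ii)$\Rightarrow$(iii) is the heart of the proof: I first rule out $L=0$ as a subsequential limit of $\rho_{t+s}/\rho_t$, since by $(\star)$ it would force $(X_{t_n})_{(s)}\limdis 0$, while the bound $\er[X_{t_n}^s\II_{X_{t_n}\leq\epsilon}]\leq \epsilon^s$ and $\pr(X_{t_n}>\epsilon)\to\pr(X_\infty>\epsilon)>0$ together yield $\er[X_{t_n}^s]\to\infty$ and hence $(X_{t_n})_{(s)}\to\infty$ in probability, a contradiction. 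So every subsequential limit lies in $(0,\limsup\rho_{t+s}/\rho_t]$; choosing a continuity point $x$ of $LX_\infty$ with $p:=\pr(LX_\infty\leq x)>0$, the identity $\er[X_t^s\,\II_{X_t\leq x}] = \er[X_t^s]\,\pr((X_t)_{(s)}\leq x)\leq x^s$ gives $\limsup_n\er[X_{t_n}^s]\leq x^s/p<\infty$. A further sub-subsequence extraction along which $\er[X_{t_n}^s]\to A$ and $\rho_{t_n+s}/\rho_{t_n}\to L$, combined with step (a) of the proof of Proposition \ref{melib} applied to $h(x)=x^sg(x)$ (continuous, compactly supported), yields $A\,\pr(LX_\infty\in\cdot)=\er[X_\infty^s]\,\pr((X_\infty)_{(s)}\in\cdot)$; taking total mass gives $A=\er[X_\infty^s]$ and $LX_\infty\simdis (X_\infty)_{(s)}$ pins $L=l(s)$. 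Hence $\rho_{t+s}/\rho_t\to l(s)$ and $\er[X_t^s]\to\er[X_\infty^s]$; Theorem \ref{th2}(1) upgrades this to moment convergence on $[0,s]$, and iterating $(\star)$ with $s$ replaced by $2s,4s,\ldots$ extends convergence to all of $[0,\infty)$, proving (iii).

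\emph{Parts 2 and 3.} Passing to the limit in the Mellin form of $(\star)$ under (iii) produces the key algebraic identity $l(s)^\lambda=\er[X_\infty^{s+\lambda}]/(\er[X_\infty^s]\,\er[X_\infty^\lambda])$ for all $s,\lambda\geq 0$; swapping $s$ and $\lambda$ forces $l(s)^\lambda=l(\lambda)^s$, whence $l(s)=e^{cs}$ with $c:=\log l(1)\geq 0$. The asymptotic of $\rho_t$ announced in Part 2 follows from $\er[X_t]\to\er[X_\infty]$ together with the identity $\er[X_t]=\er[X^{t+1}]/(\er[X^t]\,\rho_t)$. Setting $\phi(\lambda):=\log\er[X_\infty^\lambda]$, the preceding identity becomes the functional equation
$$\phi(s+\lambda) = cs\lambda+\phi(s)+\phi(\lambda), \qquad s,\lambda\in\TT.$$
On $\TT=\mathds{N}$, setting $\lambda=1$ and iterating $\phi(n+1)-\phi(n)=cn+a$ (with $a:=\phi(1)$) gives $\phi(n)=(a-c/2)n+cn^2/2$, i.e.\ the integer moments of $e^N$ with $N\sim\mathcal{N}(a-c/2,c)$; moment indeterminacy of $X_\infty$ is the classical moment indeterminacy of the log-normal. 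On $\TT=[0,\infty)$, the substitution $g(\lambda):=\phi(\lambda)-c\lambda^2/2$ reduces the functional equation to Cauchy additivity $g(s+\lambda)=g(s)+g(\lambda)$; since $\phi$ is log-convex (Proposition \ref{lcx}) hence continuous, so is $g$, forcing $g(\lambda)=(a-c/2)\lambda$ and therefore $\phi(\lambda)=(a-c/2)\lambda+c\lambda^2/2$ on $[0,\infty)$. By Lemma \ref{mellin} this identifies $X_\infty\simdis e^N$. Finally, identity \eqref{xs} is a one-line Mellin check: $\er[(X_\infty)_{(s)}^\lambda]=\er[X_\infty^{s+\lambda}]/\er[X_\infty^s]=e^{cs\lambda}\er[X_\infty^\lambda]=\er[(e^{cs}X_\infty)^\lambda]$, then invoke Lemma \ref{mellin} again.

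The main obstacle is the implication (ii)$\Rightarrow$(iii): boot-strapping from a single-step ratio bound on $\rho_{t+s}/\rho_t$ to moment convergence of $X_t$ of every order requires combining $(\star)$ with Fatou-type arguments, the size-biasing/uniform-integrability machinery of Section \ref{limel}, and a careful subsequence extraction ruling out the degenerate limit $L=0$; the rest is an algebraic unfolding via Proposition \ref{lcx}, the Cauchy functional equation, and Lemma \ref{mellin}.
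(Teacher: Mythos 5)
Your overall architecture coincides with the paper's: the same implication cycle driven by the factorization $(X_t)_{(s)}\simdis(\rho_{t+s}/\rho_t)\,X_{t+s}$, the same bootstrap of uniform integrability from level $s$ to $2s,4s,\dots$ feeding into Theorem \ref{th2}, the same identity $l(s)^{\mu}=l(\mu)^{s}$, and the same functional equation for $\log\er[X_\infty^{\lambda}]$. Your treatment of the Cauchy step on $[0,\infty)$ (subtracting $c\lambda^2/2$ and invoking continuity from log-convexity) is in fact more careful than the paper's, which asserts $k(\lambda)=k(1)^{\lambda}$ without naming the regularity used. Parts 1, 2 and 3(i)--(ii) are sound.

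There is, however, one genuine gap: your derivation of the identity \eqref{xs} fails when $\TT=\mathds{N}$. You establish $\er[(X_\infty)_{(s)}^{\lambda}]=\er[(e^{cs}X_\infty)^{\lambda}]$ only for $\lambda\in\TT$, and then invoke Lemma \ref{mellin}. In the continuous case this is fine, but for $\TT=\mathds{N}$ the two Mellin transforms are known to agree only on the integers, not on an interval, so Lemma \ref{mellin} does not apply --- and it cannot be rescued, since part 3(i) of the very statement you are proving records that the law of $X_\infty$ is \emph{not} determined by its integer moments (log-normal indeterminacy). The correct route, which works uniformly in $\TT$, is the paper's: by Proposition \ref{melib} one has $(X_t)_{(s)}\limdis(X_\infty)_{(s)}$, while the factorization $(\star)$ together with $\rho_{t+s}/\rho_t\to e^{cs}$ and $X_{t+s}\limdis X_\infty$ gives $(X_t)_{(s)}\limdis e^{cs}X_\infty$; uniqueness of weak limits then yields \eqref{xs} as a genuine distributional identity. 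A second, minor omission: you set $c:=\log l(1)$ and assert $c\geq 0$ without argument; since $c$ is the variance of $N$, this sign matters. It follows from Corollary \ref{nef}: $\rho_t\siminfini\er[X_\infty]\,\er[X_{(t)}]$ and $t\mapsto\er[X_{(t)}]=\mathcal{M}_X(t+1)/\mathcal{M}_X(t)$ is nondecreasing, whence $l(s)\geq 1$ (alternatively, superadditivity of $\lambda\mapsto\log\er[X_\infty^{\lambda}]$ from Proposition \ref{lcx} gives $l(s)^{\mu}\geq 1$ directly).
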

\begin{proof} Using properties (P1) and (P2), we start by noticing the following identity valid for every $t,\,s,\,\mu  \in \TT$ and $x\geq 0$:
$$\er[ X_t^{s +\mu}\,{\II}_{ X_t> x}]= \er\big[\big(\frac{X_{(t)}}{\rho_t}\big)^\lambda\;{\II}_{X_{(t)}> x \rho_t}\big] =\frac{\er[X^{t+s +\mu} \;{\II}_{X> x \rho_t}]}{\rho_t^\lambda\; \er[X^t]}= \frac{\er[X^{t+s}]}{\rho_t^\lambda\; \er[X^t]} \;\; \er[X_{(t+s)}^\mu  \;{\II}_{X_{(t+s)}> x \rho_t}].$$
In particular, for every $t,\,s,\,\mu  \in \TT$ and $x,\,y\geq 0$, we have
\begin{eqnarray}
\er[ X_t^{s +\mu}\,{\II}_{ X_t> x}]&=& \er[X_t^s]\; \left(\frac{\rho_{t+s}}{\rho_t}\right)^\mu  \;  \er[X_{t+s}^\mu  \;{\II}_{X_{t+s}> x \frac{\rho_t}{\rho_{t+s}}}]\label{e1}\\
\er[ X_t^{s}\,{\II}_{ X_t>  x}]&=& \er[X_t^s]\; \pr(X_{t+s}> x \frac{\rho_t}{\rho_{t+s}}) \label{e2}\\
\er[ X_t^{s}\,{\II}_{ X_t\leq  y}]&=&\er[X_t^s]\; \pr(X_{t+s}\leq  y \frac{\rho_t}{\rho_{t+s}})\label{e3}
\end{eqnarray}
If $X_t$ converges in distribution to $X_\infty$ and if  $z$ is a continuity point of $u\mapsto \pr (X_\infty \leq Ku)$, then for every $\epsilon \in (0,1)$, there exists $t_z\in \TT$ such that
\begin{equation}
|\pr (X_\infty \leq K z) -\pr (X_{t+s} \leq z)|=|\pr (X_{t+s} > z)-\pr (X_\infty > K z)| <\epsilon,\quad \mbox{for all}\;t\geq t_z.\label{14}\end{equation}
1)\underline{$(iii)\Longrightarrow(ii)$}  is easy, since by (\ref{e1}) with $x=0$, we have
\begin{equation}\er[ X_\infty^{s +\mu}]=\lim_{t\to \infty }\er[ X_t^{s +\mu}]= \er[X_\infty^s]\;\er[X_\infty^\mu]\; \lim_{t\to \infty} \left(\frac{\rho_{t+s}}{\rho_t}\right)^\mu\quad \mbox{for every} \;s,\,\mu \in \TT\rsetminus \{0\} .
\label{15}\end{equation}
\noindent\underline{$(ii)\Longrightarrow(iii)$}  Here $s\in \TT\rsetminus \{0\}$ is fixed and we proceed through two steps:

\noindent \underline{{\it Step 1}}:  we know that there exits $K>0, \,t_{_s}\in \TT$   such that
\begin{equation} \rho_t/\rho_{t+s} \geq K, \quad \mbox{for}\;t\geq t_{_s}.
\label{13}\end{equation}
The last inequality combined  with identities (\ref{e2}) and (\ref{e3}) give that for every $t\geq t_{_s},\,x,\,y>0$,
\begin{equation}\er[ X_t^{s}\,{\II}_{ X_t>  x}]\leq  \er[X_t^s]\; \pr(X_{t+s}> K x )  \quad \mbox{and}\quad y^s  \geq \er[X_t^s]\;   \pr (X_{t+s} \leq Ky).\label{e4}
\end{equation}
Now we choose $\epsilon =1/4$  in  (\ref{14})  with a continuity point $z=y_{_0}$ such that $\pr(X_\infty \leq K y_{_0})>1/2$. We get by the second inequality in (\ref{e4}) that
$$y_{_0}^s\geq \er[ X_t^{s}\,{\II}_{ X_t\leq  y_{_0}}]=\er[X_t^s]\;   \left(\pr (X_\infty \leq Ky_{_0})-\frac{1}{4}\right) > \frac{1}{4} \er[X_t^s],\quad t\geq t_{y_{_0}}.$$
We deduce that $C(y_{_0}):=\sup_{t\geq t_{y_{_0}}}\er[X_t^s]<\infty$. The first inequality in (\ref{e4}) gives
$$\er[ X_t^{s}\,{\II}_{ X_t> x}] \leq  C(y_{_0}) \;   \pr(X_{t+s}> x K), \quad t\geq t_{y_{_0}}, \, x>0.$$
The next step is to choose  $\epsilon$ arbitrary small in  (\ref{14}) with a continuity point   $z=x_{_0}$, big enough, so that $\pr(X_\infty > K x_{_0})< \epsilon$ in order to have for  $t\geq \max(t_{_s}\,,\,t_{x_{_0}}\,,\,t_{y_{_0}})$ and $x\geq x_{_0}$, that
$$\er[ X_t^{s}\,{\II}_{ X_t> x}]\leq \er[ X_t^{s}\,{\II}_{ X_t> x_{_0}}] \leq  C(y_{_0})\;\pr(X_{t+s}> x K)\leq  C(y_{_0})\;\pr(X_\infty> x_{_0} K)+\epsilon \leq (1+C(y_{_0}))\epsilon.$$
The latter justifies that ${(X_t)}_{t\in \TT}$ is $s$-uniformly integrable.

\noindent \underline{{\it Step 2}}: By (\ref{e1}) and (\ref{13}), we have for every $t\geq t_{_s}$,
$$\er[ X_t^{2s}\,{\II}_{ X_t> x}]= \er[X_t^s]\; \left(\frac{\rho_{t+s}}{\rho_t}\right)^s  \;  \er[X_{t+s}^s  \;{\II}_{X_{t+s}> x \frac{\rho_t}{\rho_{t+s}}}]\leq \er[X_t^s]\; \left(\frac{\rho_{t+s}}{\rho_t}\right)^s  \;  \er[X_{t+s}^s  \;{\II}_{X_{t+s}> K x}] . $$
In step 1, we gained that the family $M(s)=\limsup_{t\in \TT} \er[X_t^s]<\infty$  that ${(X_t)}_{t\in \TT}$ is $s-UI$. The last inequality shows that ${(X_t)}_{t\in \TT}$ is also $2s-UI$. Repeating the procedure, we obtain that ${(X_t)}_{t\in \TT}$ is also $ms-UI$ for every positive integer $m$ and    then, by Proposition \ref{prop0},  ${(X_t)}_{t\in \TT}$ is also $\lambda-UI$ for every positive number $\lambda >0$. Then we apply Theorem \ref{th2}.

\noindent\underline{$(iii)\Longrightarrow(i)$}  is an immediate application of point 1) in Theorem \ref{th2}.

\noindent \underline{$(i)\Longrightarrow(iii)$}: Identity (\ref{e1}) shows that
$$\er[ (X_t)_{(\lao)}^\mu] =  \left(\frac{\rho_{t+\lao}}{\rho_t}\right)^\mu  \;  \er[X_{t+\lao}^\mu], \quad  \mbox{for every}\; t,\, \mu  \in \TT.$$
By Lemma \ref{mellin}, we deduce that
$$X_{t+\lao} \simdis \frac{\rho_t}{\rho_{t+\lao}}\; (X_t)_{(\lao)}.$$
By Proposition \ref{melib}, we have that $(X_t)_{(\lao)}$ converges in distribution and that the triplet   $(U_t,V_t,W_t)= (X_{t+\lao},(X_t)_{(\lao)},\rho_{t+\lao}/\rho_t)$ satisfies Corollary \ref{XYZ}. We obtain that $\rho_{t+\lao}/\rho_t$ converges as $t \to \infty$ and then we use  $(ii)\Longrightarrow(iii)$.

2) The first claim stems from $\er[X_\infty]= \lim_{t\to \infty} \er[X_{(t)}]/\rho_t$. For the second claim, notice by Corollary \ref{nef}, that the function $t\mapsto \rho_t$ is asymptotically increasing, so that $l(s)\geq 1$ for every $s \in \TT$.  By (\ref{15}), we recover that
\begin{equation}
l(s)^\mu=\lim_{t\to \infty} \left(\frac{\rho_{t+s}}{\rho_t}\right)^\mu = \frac{\er[ X_\infty^{s +\mu}]}{ \er[X_\infty^s]\;   \er[X_\infty^\mu]} , \quad \mbox{for every}\;   s,\,\mu  \in \TT\rsetminus \{0\}.
\label{lsm}\end{equation}
From the symmetry in (\ref{lsm}), it is seen that $l(s)^\mu= l(\mu)^s$ for every $s,\, \mu \in \TT$. Taking  $c=\log l(1)\geq 0$, we get representation (\ref{rts}). The latter could be also deduced from Lemma 1 in \cite{aj}.

3) By Proposition \ref{melib},  $(X_t)_{(s)}  \limdis (X_\infty)_{(s)}$ for all $s\in \TT$, as $t\to \infty$,  and by properties (P0) and (P3), we obtain
$$(X_t)_{(s)}\simdis\frac{\rho_{t+s}}{\rho_t} \, X_{(t+s)} \limdis e^{c\mu} \, X_\infty \simdis (X_\infty)_{(s)}.$$
The latter  gives that the Mellin transform $\lambda \mapsto \mathcal{M}_{X_\infty}(\lambda)=\er[X_\infty^\lambda]$ is a solution of the functional equation:
\begin{equation}
h(1)=\er[X_\infty]=e^{a}, \qquad h(s +\mu)= e^{cs\mu} h(s)\;   h(\mu)  , \quad \mbox{for every}\; s,\,\mu  \in \TT,
\label{itera}\end{equation}
and this could be also  from identity (\ref{lsm}). The function $h_0(\lambda)= e^{c\lambda(\lambda -1)}$ solves the equation without the initial condition. Any solution of the form $h=h_0\, k$, has necessary $k(s +\mu)= k(s)\;   k(\mu)$ for every $s,\,\mu  \in \TT$, so that $k(\lambda)=k(1)^\lambda$. Due the initial condition, necessarily $k(1)=e^{a}$.
\end{proof}

If $\TT=\mathds{N}$, then identity (\ref{xs}) true for every $s\in \mathds{N}$ is equivalent to the same identity with $s=1$:
\begin{equation}\label{ind}
(X_\infty)_{(1)}\simdis  e^{c} X_\infty.
\end{equation}

We stress that  identity (\ref{ind}) does not allow to recover the log-normal distribution of $X_\infty$ which is not moment determinate. This situation was studied by many authors,  \cite{agk, bb, gar, hs2, pak1, pak2, vard} for instance and all these works were motivated by finding the set or possible limit for the discrete problem (\ref{pb0}).  We also stress that Harkness and Shantaram \cite{hs1} only showed, and in case $\TT=\mathds{N}$, that condition $(iii)$ in our Theorem \ref{th3}, implies $(i)$ and $(ii)$ without specifying the distribution of $Z_\infty$ which we know equal in distribution to $\mathfrak{e} \, X_\infty$. Theorem \ref{th3} distinguishes between the situation $\TT=\mathds{N}$ and $\TT= [0,\infty)$.  It is trivial that convergence of the family ${(X_t)}_{t\in [0,\infty)}$ implies that the subsequence ${(X_t)}_{t\in \mathds{N}}$ converges to the same limit. Theorem \ref{th4} below shows that the converse is true and  that actually in both discrete and continuous time problems, the only possible limits of normalized biased families are the log-normal distributions. Equivalently, the only possible limits of normalized families obtained by the stationary excess operator are the mixture of the exponential and the log-normal distribution.
\begin{theorem}[The normal limit theorem improved]  The following statements are equivalent:

\noindent (i)  convergence {\rm (\ref{pb}, case $T=\mathds{N}$)}
holds and $\mathcal{D}_{Z_\infty}$ contains some value  $\lao  \in
(0,\infty)$;

\noindent (ii)  convergence {\rm (\ref{pb}, case $T=[0,\infty)$)}
holds and $\mathcal{D}_{Z_\infty}$ contains some value $\lao
\in(0,\infty)$;

\noindent (iii)  convergence {\rm (\ref{pb1}, case $T=\mathds{N}$)}
holds and $\mathcal{D}_{X_\infty}$ contains some value $\lao
\in(0,\infty)$;

\noindent (iv)  convergence {\rm (\ref{pb1}, case $T=[0,\infty)$)}
holds and $\mathcal{D}_{X_\infty}$ contains some value $\lao
\in(0,\infty)$.

\noindent In all cases,  $\mathcal{D}_{Z_\infty}$ and $\mathcal{D}_{X_\infty}$ necessarily contain $[0,\infty)$ and convergence (\ref{rts}) holds with some $c\geq 0$.  We also have   $Z_\infty \simdis \mathfrak{e} X_\infty$ where $\mathfrak{e}$  and $X_\infty$ are independent and have respectively the standard exponential distribution and the log-normal distribution, i.e., if for every choice of  $\alpha =\er[X_\infty]$, the random variable  $\log X_\infty$ has the normal distribution with mean equal to $\log \alpha- \frac{c}{2}$ and variance equal to  $\frac{c}{2}$. It is understood that $X_\infty = \alpha\,$ if  $\,c=0$. Furthermore we have the identity in law
$$Z_\infty \simdis e^{-cs} \;\mathfrak{b}_s\;(Z_\infty)_{(s)},\quad \mbox{\it for every}\; s\geq 0 $$
where $\mathfrak{b}_s$ is assumed to be independent from $(Z_\infty)_{(s)}$.
\label{th4}\end{theorem}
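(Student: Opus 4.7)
My plan would be to first reduce the four convergence problems to essentially one, via the beta factorization that links $Z_t$ and $X_t$, and then to close the loop by transferring discrete-time information to continuous time through the log-convex structure of the Mellin transform. From (\ref{beta1}) and the substitution $\rho_t = tc_t$ I would write
$$Z_t \simdis (t\mathfrak{b}_t)\,X_t,$$
the two factors being independent. Using (\ref{bb}), $t\mathfrak{b}_t\limdis \mathfrak{e}$, and a direct beta-integral computation gives $\er[(t\mathfrak{b}_t)^k] = k!\,t^k/((t+1)\cdots(t+k)) \to k! = \er[\mathfrak{e}^k]$, so the family $(t\mathfrak{b}_t)_{t\in\TT}$ is $\lao$-uniformly integrable for every $\lao>0$. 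Corollary~\ref{XYZ} then delivers the equivalences (i)$\Leftrightarrow$(iii) and (ii)$\Leftrightarrow$(iv) together with the factorization $Z_\infty \simdis \mathfrak{e}\,X_\infty$ in all cases. The trivial restrictions $\mathds{N}\subset[0,\infty)$ supply (ii)$\Rightarrow$(i) and (iv)$\Rightarrow$(iii).

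The central step is the passage (iii)$\Rightarrow$(iv). Under (iii), Theorem~\ref{th3} applied in the discrete setting furnishes a constant $c\geq 0$ with $\rho_{n+1}/\rho_n\to e^c$ and the asymptotic $\rho_n \siminfini \er[X^{n+1}]/(\er[X_\infty]\er[X^n])$. I would then introduce the continuous-time normalization $\tilde\rho_t:=\er[X^{t+1}]/(\er[X_\infty]\er[X^t])$, well-defined for every $t\geq 0$ since $X$ has all positive moments, and satisfying $\tilde\rho_n\sim\rho_n$. Writing $g(t):=\log \mathcal{M}_X(t)$, convex by Proposition~\ref{lcx}, the Mellin transform of the continuous family is expressed as
$$\log\er[(X_{(t)}/\tilde\rho_t)^\lambda] = g(t+\lambda) + (\lambda-1)g(t) - \lambda g(t+1) + \lambda\log\er[X_\infty],$$
which, by integrating the identity $g'(t+u)=g'(t)+\int_0^u g''(t+v)\,dv$ twice, rewrites as $\int_0^\lambda g''(t+v)(\lambda-v)\,dv - \lambda\int_0^1 g''(t+v)(1-v)\,dv$. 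The strategy is to establish that $g''(s)\to c$ as $s\to\infty$: the discrete input $g(n+2)-2g(n+1)+g(n)\to c$ is immediate from Theorem~\ref{th3}, while the continuous extension exploits that $t\mapsto g(t+1)-g(t)$ is nondecreasing by Corollary~\ref{nef}, so that averaged convergence along integers should force convergence along the continuum. Dominated convergence then yields $\er[(X_{(t)}/\tilde\rho_t)^\lambda]\to e^{a\lambda+c\lambda(\lambda-1)/2}$ for every $\lambda\geq 0$, and Theorem~\ref{th2} part~2 identifies the limit in distribution as the log-normal $e^N$ with $N\sim\mathcal{N}(a-c/2,c)$, establishing (iv).

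The main obstacle will be the passage from integer-time to continuous-time behaviour of $g''$: since $g''(s)$ is the variance of $\log X$ under the $s$-tilted law, it need not be monotone in $s$, so one cannot invoke pointwise monotonicity of $g''$ directly, but must combine convexity of $g$ with the monotonicity in Corollary~\ref{nef} applied to successive differences, or alternatively work directly with the integral expression for $\log \er[(X_{(t)}/\tilde\rho_t)^\lambda]$ without going through $g''$. Once (iv) is established, Theorem~\ref{th3} in its continuous version automatically provides $\mathcal{D}_{X_\infty}\supset [0,\infty)$, the representation (\ref{rts}), and the log-normal law for $X_\infty$. Finally, the closing identity
$$Z_\infty \simdis e^{-cs}\,\mathfrak{b}_s\,(Z_\infty)_{(s)}$$
follows from $Z_\infty\simdis \mathfrak{e}\,X_\infty$ by combining property (P4), the self-similarity $(X_\infty)_{(s)}\simdis e^{cs}X_\infty$ from Theorem~\ref{th3}, the exponential biasing $\mathfrak{e}_{(s)}\simdis \mathfrak{g}_{1+s}$ from (\ref{bia}), and the beta-gamma decomposition $\mathfrak{e}\simdis \mathfrak{b}_s\,\mathfrak{g}_{1+s}$ in (\ref{algb}).
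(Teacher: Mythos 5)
Your reduction of the four statements to the single implication (iii)$\Rightarrow$(iv) is sound and matches the paper: the beta factorization $Z_t\simdis (t\mathfrak{b}_t)X_t$ together with Corollary \ref{XYZ} gives (i)$\Leftrightarrow$(iii) and (ii)$\Leftrightarrow$(iv), the restriction $\mathds{N}\subset[0,\infty)$ gives (iv)$\Rightarrow$(iii), and your derivation of the closing identity $Z_\infty\simdis e^{-cs}\mathfrak{b}_s(Z_\infty)_{(s)}$ via (P4), (\ref{bia}) and (\ref{algb}) is exactly the paper's. The problem is the step you yourself flag as the ``main obstacle'': your route through $g''(s)\to c$ is not only unproved, it is aiming at a target that is strictly stronger than what is needed and that the available hypotheses do not deliver. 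From the discrete data you only control double unit averages of $g''$, namely $\Delta_1\Delta_1 g(n)=\int_0^1\int_0^1 g''(n+u+v)\,du\,dv\to c$, and convergence of such averages does not force pointwise (or even locally averaged, at non-integer offsets) convergence of $g''$: a convex $g$ with $g''(s)=c+\sin(2\pi s)$ has $\Delta_1\Delta_1 g(n)\equiv c$ while your integral expression for $\log\er[(X_{(t)}/\tilde\rho_t)^\lambda]$ would then oscillate in $t$ for non-integer $\lambda$. Corollary \ref{nef} gives monotonicity of the first difference $\Delta_1 g$, not of $g''$, so ``averaged convergence along integers should force convergence along the continuum'' is precisely the assertion that fails in general. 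As written, the proposal does not close (iii)$\Rightarrow$(iv).

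The paper's resolution avoids this entirely by never asking for convergence of anything new along the continuum a priori: by Theorem \ref{th3}, assertion (ii) there, it suffices to upgrade the discrete bound $\limsup_n\rho_{n+1}/\rho_n<\infty$ to the continuous bound $\limsup_{t}\rho_{t+s}/\rho_t<\infty$ for all $s>0$ --- a one-sided boundedness statement, not a limit. This is done by replacing $\rho_t$ with $r(t)=\er[X_\infty]\,\er[X_{(t)}]$ (asymptotically equivalent by Theorem \ref{th3} part 2), observing that $t\mapsto r(t)$ is nondecreasing by Corollary \ref{nef}, and sandwiching $[t]\leq t\leq t+s\leq [t]+[s]+2$ so that
$$\frac{r(t+s)}{r(t)}\leq \frac{r([t]+[s]+2)}{r([t])}\leq \prod_{j=0}^{[s]+1}\frac{r([t]+j+1)}{r([t]+j)},$$
whose $\limsup$ is finite by the discrete hypothesis. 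The convergence $\rho_{t+s}/\rho_t\to e^{cs}$ along the continuum then falls out \emph{a posteriori} from Theorem \ref{th3} applied with $\TT=[0,\infty)$, rather than being an input. If you want to keep your Mellin-transform computation, the fix is to work with first differences $\Delta_1 g$ (which Corollary \ref{nef} controls) and only claim upper bounds between consecutive integers, then hand the rest to the uniform-integrability machinery of Theorems \ref{th2} and \ref{th3}.
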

\begin{proof} By the discussion before Theorem \ref{th3}, we know that $(i)\Longleftrightarrow(iii)$ and that $(ii)\Longleftrightarrow(iv)$.  $(iv)\Longrightarrow(iii)$ being trivial, it remains to show $(iii)\Longrightarrow(iv)$. By Theorem \ref{th3}, it is enough to show that
\begin{equation}
\limsup_{n\in \mathds{N},\;n\to \infty} \frac{\rho_{n+1}}{\rho_n} <\infty \quad \mbox{is equivalent to}\quad    \limsup_{t\in [0,\infty),\;t\to \infty} \frac{\rho_{t+s}}{\rho_t} <\infty, \quad  \mbox{for all}\, s>0.
\label{fin}\end{equation}
By Theorem \ref{th3}, we also know that
$$\rho_t \siminfini r (t)= \er[X_\infty]\, \er[X_{(t)}]$$
and  Corollary  \ref{nef}  says that the function  $t\mapsto r(t)$  is nondecreasing.  Let $[x]$ the integer of the real number $x$. We have $[t] \leq t \leq t+s\leq [t]+[s]+2$ for every $t,\,s > 0$ and then $r(t+s)/r(t)\leq r([t]+[s]+2)/r([t])$. It is then immediate that
\begin{eqnarray*}
\limsup_{t\in [0,\infty),\;t\to \infty}\frac{\rho_{t+s}}{\rho_t} &=&\limsup_{t\in [0,\infty),\;t\to \infty} \frac{r(t+s)}{r(t)}\leq  \limsup_{t\in [0,\infty),\;t\to \infty} \frac{r([t]+[s]+2)}{r([t]}= \limsup_{n\in \mathds{N},\;n\to \infty} \frac{r(n+[s]+2)}{r(n)}\\
 &\leq&  \limsup_{n\in \mathds{N},\;n\to \infty} \left(\frac{r(n+1)}{r(n)}\right)^{[s]+2)}= \limsup_{n\in \mathds{N},\;n\to \infty} \left(\frac{\rho_{n+1}}{\rho_n}\right)^{[s]+2)}<\infty.
\end{eqnarray*}
On the other hand,
$$\limsup_{t\in [0,\infty),\;t\to \infty}\frac{\rho_{t+1}}{\rho_t}\geq  \limsup_{n\in \mathds{N},\;n\to \infty}\frac{\rho_{n+1}}{\rho_n},$$
which shows (\ref{fin}). Last identity in the theorem stems from property (P4), identities (\ref{zex}), (\ref{bia}) and then Beta-Gamma algebra identities (\ref{algb}):
$$(Z_\infty)_{(s)}  \simdis (\mathfrak{e}\, X_\infty)_{(s)}\simdis \mathfrak{g}_{s+1}\, (X_\infty)_{(s)}$$
which yields
$$e^{-cs} \,\mathfrak{b}_s\,(Z_\infty)_{(s)}  \simdis \mathfrak{e}  X_\infty  \simdis Z_\infty .$$
\end{proof}

In order to provide an example, we recall the following concept: Let  $g:(0,\infty)\to \mathds{R}$ and $\Delta_a$ the difference operator given by $\Delta_a g(x) := g(x+a)-g(x)$. The function $g$ is   said monotone of order $k\in \mathds{N}$, if it satisfies
$$(-1)^k\, \Delta_{a_1}\Delta_{a_2}\cdots \Delta_{a_k} g \leq 0,\qquad \mbox{for all} \,\; a_1,a_2,\cdots a_k>0, \; k\in \mathds{N}\rsetminus \{0\}.$$
Assume $g$ is $k$ times  differentiable. The discussion in \cite{SSV} at the end of page 43 indicates, by a  mean value theorem argument, that for every $x\in (0,\infty)$,
$$ \Delta_{a_1}\Delta_{a_2}\cdots \Delta_{a_k} g(x)= g^{(k)}(x+\theta_1 a_1+ \theta_2a_2+\cdots \theta_k),$$
for suitable $\theta_1 , \theta_2,\cdots, \theta_k\in (0,1)$. It is then immediate that $(-1)^k\, g^{(k)}\geq 0$ implies that $g$ is monotone of order $k$.
By Theorem 4.11 p.42 \cite{SSV},  $g$  is $n$-monotone is equivalent to its nonnegativity and monotonicity of all order $k=1, \cdots n$ and is also equivalent to $(-1)^k\, g^{(k)}\geq 0$ for all $k=0, \cdots n$.

We remind that $X$  is a nonnegative random variable such that $[0,\infty)\subset \mathcal{D}_X$. Now choose a positive  quantity $\alpha$, to be allocated to the value of $\er[X_\infty]$, and define for $t\geq 0$
$$ g_X(t)=\log \er[X^t]\quad \mbox{and} \quad   \rho_t = \alpha \, \er[X_{(t)}]= \alpha \frac{\er[X^{t+1}]}{\er[X^t]}=\alpha  \exp  \Delta_1 g_X (t).$$
We already know that by Proposition \ref{lcx} that $g_X$ is  monotone of order 2 (that is $g_X$ is convex) and by Corollary \ref{nef}, we know that   $t \mapsto\rho_t$ in nondecreasing and then the quantity
\begin{equation}\label{1s}
\frac{\rho_{t+s}}{\rho_t} = \frac{\er[X_{(t+s)}]}{\er[X_{(t)}]}= \exp  \Delta_1\Delta_s g_X (t)
\end{equation}
is bounded below by $1$. It is clear that if $g_X$   is   monotone of order 3 (this holds if the derivative $g_X'$ is concave),  then the function   $t\mapsto  \rho_{t+s}/\rho_t$  is nonincreasing. The latter implies that $\lim_{t\to \infty} \rho_{t+s}/\rho_t$  exits  and  is necessarily as in (\ref{rts}).  We are now able to provide examples of random variables satisfying (\ref{rts}). For infinite divisibility property  of real random variables, the reader is referred to the book of Steutel-van Harn \cite{steutel} and also to \cite{SSV}:
\begin{example}{\rm  If  $X$ is a random variable such  that  $g_X'$ is a concave function, then (\ref{rts}) is satisfied. For instance, assume $\log X$ is an infinite divisible random variable such that its L\'evy exponent $g_X=\log \mathcal{M}_X$  has the form
\begin{equation}\label{logx}
g_X(\lambda)= d\lambda + \frac{\sigma^2}{2} \lambda^2 +\int_{(0,\infty)}(e^{-\lambda x} -1 +\lambda x\II_{ x \leq 1})\, \pi(dx),\quad \lambda \geq 0,
\end{equation}
with $d \in \mathds{R}, \, \sigma\geq 0$ and the L\'evy measure $\pi$ satisfy the $\int_{(0,\infty)}(x^2\wedge 1) \pi(dx)<\infty.$ It  is easy to check that $g_X'$ is concave. Furthermore, we have
$$ \Delta_1\Delta_s g (t) = \sigma^2 s + \int_{(0,\infty)}e^{-t x}(1-e^{- x})(1-e^{-s x})\, \pi(dx),\quad t,\,s>0,$$
and by (\ref{1s}), $X$ satisfies (\ref{rts}) with $c=\sigma^2.$}
\end{example}


\end{document}